\newcommand{\ZZ}{\mathbb{Z}}
\newcommand{\QQ}{\mathbb{Q}}
\newcommand{\NN}{\mathbb{N}}
\newcommand{\CC}{\mathbb{C}}
\newcommand{\PP}{\mathbb{P}}
\newcommand{\VV}{\mathbb{V}}
\newcommand{\bMgn}[1][1]{{\overline{\mathcal{M}}_{g,#1}}}
\newcommand{\bMgN}{{\overline{\mathcal{M}}_{g,n}}}
\newcommand{\tMgn}[1][1]{{\widetriangle{\mathcal{M}}_{g,#1}}}
\newcommand{\bMon}[1][1]{{\overline{\mathcal{M}}_{0,#1}}}
\newcommand{\tMgN}{{\widetriangle{\mathcal{M}}_{g,n}}}
\newcommand{\Oo}{\mathscr{O}}
\newcommand{\Gm}{\mathbb{G}_m}
\newcommand{\Ll}{\mathcal{L}}
\newcommand{\Op}{{\mathcal{O}}}
\newcommand{\Qp}{{Q_{+}}}
\newcommand{\Qm}{{Q_{-}}}
\newcommand{\Qpm}{{Q_{\pm}}}
\newcommand{\Aut}{\mathrm{Aut}}
\newcommand{\End}{\mathrm{End}}
\newcommand{\Vir}{\mathrm{Vir}}
\newcommand{\Der}{\mathrm{Der}}
\newcommand{\rank}{{\rm rank}}
\theoremstyle{plain}
\newtheorem{thm}{Theorem}
\newtheorem{corx}{Corollary}
\newtheorem{lemma}[subsubsection]{Lemma}
\newtheorem{proposition}[subsubsection]{Proposition}
\theoremstyle{remark}
\newtheorem{question}{Question}
\newtheorem{example}[subsubsection]{Example}
\newtheorem{remark}[subsubsection]{Remark}
\theoremstyle{definition}
\newtheorem{definition}[subsubsection]{Definition}
\definecolor{ao(english)}{rgb}{0.0, 0.5, 0.0}
\begin{document}

\title[On global generation of bundles on moduli spaces of curves from representations of VOAs]{On global generation of vector bundles on the moduli space of curves\\ from representations of vertex operator algebras}
{\tiny{\author[C.~Damiolini]{C.~Damiolini}
\address{Chiara Damiolini  \newline \indent   Department of Mathematics, Rutgers University,  Piscataway, NJ 08854  \newline  \indent Department of Mathematics, University of Pennsylvania, Philadelphia, PA 19104-6395}
\email{chiara.damiolini@gmail.com} }}
{\tiny{\author[A.~Gibney]{A.~Gibney}
\address{Angela Gibney  \newline \indent   Department of Mathematics, Rutgers University,  Piscataway, NJ 08854    \newline \indent Department of Mathematics, University of Pennsylvania, Philadelphia, PA 19104-6395}
\email{angela.gibney@gmail.com} }}
 
\subjclass[2010]{14H10, 17B69 (primary), 81R10, 14D20, 14D21 (secondary)}
\keywords{Vertex operator algebras, vector bundles, global generation, moduli space of curves, tautological classes.}
\thanks{AG was partially supported by NSF DMS-1902237.}

\begin{abstract} 
We consider global generation of sheaves of coinvariants on moduli  of curves given by simple modules over certain vertex operator algebras, extending results for affine VOAs at integrable levels on stable pointed rational curves.  A number of examples illustrate the subtlety of the problem.
\end{abstract}

\maketitle

\section{Introduction}
Given an object in any category, a natural objective is to find the maps admitted by it.  On  $\overline{\mathcal{M}}_{g,n}$, the moduli stack parametrizing families of stable $n$-pointed curves of genus $g$, globally generated coherent sheaves define rational maps, which  are regular on the locus where they are free.

Sheaves of coinvariants, determined by $n$ simple admissible modules $W^i$ over a vertex operator algebra $V$ (a VOA), are defined on $\overline{\mathcal{J}}_{g,n}$, the moduli stack parametrizing families of stable pointed curves with first order tangent data. Under mild assumptions they descend to sheaves  $\mathbb{V}(V, W^\bullet)$  on  $\overline{\mathcal{M}}_{g,n}$  \cite{dgt1, dgt2}. If $V$ is $C_2$-cofinite, these sheaves are coherent \cite{DGK}, if $V$ is also rational, they are vector bundles \cite{dgt2}, and if strongly rational,  their Chern classes are tautological  \cite{dgt3}  (see \S \ref{sec:Background} for definitions). Examples include those given by affine VOAs, certain $W$-algebras, even lattice VOAs, and holomorphic VOAs (like the moonshine module), and others obtained as tensor products, orbifold algebras, and through coset constructions. 

Affine VOAs are derived from (quotients of) the affinization of a Lie algebra $\mathfrak{g}$, and  $\ell \in \mathbb{C}$, with $-\ell$ not equal to the dual Coxeter number.  The simple affine VOA $L_{\ell}(\mathfrak{g})$, 
generated by its degree 1 component $\mathfrak{g}$, is strongly rational if and only if  
$\ell \in \mathbb{Z}_{>0}$.  For $\mathfrak{g}$ reductive,  $\mathbb{V}(L_{\ell}(\mathfrak{g}), W^\bullet)$ was shown to be a vector bundle on $\overline{\mathcal{M}}_{g,n}$ in \cite{tuy}, and globally generated on $\overline{\mathcal{M}}_{0,n}$ in \cite{fakhr}.

\smallskip
In this work we investigate global generation in a more general context. Our main result is:

\begin{thm}\label{thm:GG}
Sheaves of coinvariants defined by simple admissible modules over a vertex operator algebra, strongly generated in degree 1, are globally generated on $\overline{\mathcal{J}}_{0,n}$, and on $\overline{\mathcal{M}}_{0,n}$, if defined.
\end{thm}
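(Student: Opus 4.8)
The plan is to realize $\mathbb{V}(V,W^\bullet)$ as a quotient of a manifestly globally generated sheaf built from the lowest‑weight spaces of the modules. For each $i$ let $A^i\subseteq W^i$ be the minimal‑conformal‑weight (ground state) subspace; since $W^i$ is simple and admissible it is generated from $A^i$ by the modes of $V$, and because $L_m A^i=0$ for $m\geq 1$ the action of $\mathrm{Aut}(\mathcal{O})$ on $A^i$ factors through $\mathbb{G}_m$ via $L_0$. On $\overline{\mathcal{J}}_{0,n}$ this $\mathbb{G}_m^n$‑weight is trivialized by the tangent data, so $A^\bullet\otimes\mathcal{O}:=A^1\otimes\cdots\otimes A^n\otimes\mathcal{O}_{\overline{\mathcal{J}}_{0,n}}$ is a trivial bundle and hence globally generated. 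The inclusion $A^\bullet\hookrightarrow W^\bullet$ followed by the projection to coinvariants gives a morphism
\[
q\colon\ A^\bullet\otimes\mathcal{O}_{\overline{\mathcal{J}}_{0,n}}\ \longrightarrow\ \mathbb{V}(V,W^\bullet),
\]
and it suffices to prove that $q$ is surjective, since a quotient of a globally generated sheaf is globally generated.

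For surjectivity I would argue fiberwise at every point of $\overline{\mathcal{J}}_{0,n}$ using the defining coinvariant relations: for $a\in V_1=\mathfrak{g}$ and any rational function $f$ on the curve $C$ regular on $C\setminus\{p_\bullet\}$, the total action $\sum_i(a\otimes f)_{p_i}$ vanishes on coinvariants. Because $V$ is strongly generated in degree $1$, every vector of $W^i$ is reached from $A^i$ by the current modes $a_{(-k)}$ with $a\in\mathfrak{g}$ and $k\geq 1$. Given a representative with a factor $a_{(-k)}w_i'$, I would choose $f$ with a pole of order exactly $k$ at $p_i$ and regular elsewhere — possible since $H^1(C,\mathcal{O}_C(k\,p_i))=0$ for a connected nodal curve of arithmetic genus $0$ — so that its expansion at $p_i$ is $t_i^{-k}$ while at every other $p_j$ it contributes only nonnegative modes $a_{(m)}$, $m\geq 0$. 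The relation then rewrites the class as a combination of classes of strictly smaller total conformal weight (the weight drops by $k$ at $p_i$ and does not increase elsewhere), the coefficients being rational functions of the configuration, regular on the relevant chart. Descending induction on total weight terminates at representatives lying in $A^\bullet$, proving $q$ surjective on each fiber and hence as a map of sheaves.

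To pass to $\overline{\mathcal{M}}_{0,n}$ when $\mathbb{V}(V,W^\bullet)$ is defined there, I would descend along the $\mathbb{G}_m^n$‑torsor $\overline{\mathcal{J}}_{0,n}\to\overline{\mathcal{M}}_{0,n}$: the trivial bundle $A^\bullet\otimes\mathcal{O}$ descends to $A^\bullet$ tensored with nonnegative powers of the cotangent ($\psi$) classes at the marked points, and $q$ descends to a surjection onto $\mathbb{V}(V,W^\bullet)$. As the $\psi$‑classes are semiample (base‑point‑free) on $\overline{\mathcal{M}}_{0,n}$ and $A^\bullet$ is a trivial finite‑dimensional factor, the descended source is globally generated, and therefore so is its quotient.

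I expect the main obstacle to be surjectivity of $q$ at the boundary: one must verify that the genus‑$0$ function theory and the degree‑reduction are compatible with the definition of the sheaf at a node (where the construction involves data along the glued branches), so that the induction is uniform across all of $\overline{\mathcal{J}}_{0,n}$ and not merely over the interior $\mathcal{M}_{0,n}$; an induction on $n$ via the factorization of $\mathbb{V}(V,W^\bullet)$ along boundary divisors, with base case the point $\overline{\mathcal{M}}_{0,3}$, is the natural fallback. A secondary point requiring care is confirming that the twist acquired under descent is by nonnegative powers of the $\psi$‑classes, which uses that the lowest weights $h_i$ are nonnegative; the finiteness and simplicity of the ground states $A^i$ enter both here and in identifying the source as an honest vector bundle.
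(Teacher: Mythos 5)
Your overall strategy is the same as the paper's: map the constant sheaf built from the lowest-weight spaces $\bigotimes_i W^i_0$ onto the coinvariants, and prove surjectivity fiberwise by descending induction on the filtration degree, using sections with a pole of prescribed order at one marked point and regular elsewhere. The smooth-curve part of your argument is essentially the paper's \S\ref{sec:CaseOne} (you do implicitly rely on the refinement that every positive-degree vector can be written with \emph{all} modes $a_{(-k)}$, $k\ge 1$, which is Lemma \ref{lem:gendegrefined}(ii) and needs the commutator-formula argument, but that is a minor omission).

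There are, however, two genuine gaps. First, the nodal case, which you flag as "the main obstacle" and defer, is precisely the substantive content of the proof and cannot be handled by your proposed fallbacks. The chiral Lie algebra $\mathcal{L}_{C\setminus P_\bullet}(V)$ on a nodal curve is not simply $V_1$ tensored with functions regular off $P_\bullet$: by \cite[Proposition 3.3.1]{dgt2} its elements are pairs of sections on the two components whose expansions at $Q_\pm$ lie in $\mathfrak{L}(V)_{\le 0}$ and satisfy the gluing constraint $[\sigma_{Q_-}]_0=\vartheta([\sigma_{Q_+}]_0)$. So producing a section with a pole only at $P_i$ forces you to construct an explicit compatible correction on the \emph{other} component (the elements $\beta$ and $\gamma$ in \S\ref{sec:Thm1nodalCase}); vanishing of $H^1(C,\mathcal{O}_C(kP_i))$ does not do this for you. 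Your factorization fallback is unavailable in the generality of the theorem, since factorization requires $V$ rational and $C_2$-cofinite, whereas Theorem \ref{thm:GG} is stated (and is interesting) precisely for VOAs such as admissible-level affine VOAs that satisfy neither. Second, your descent step to $\overline{\mathcal{M}}_{0,n}$ is incorrect as stated: the $\mathbb{G}_m^n$-action used in the descent acts on a homogeneous $w$ by $z^{-\deg w}$ where $\deg$ is the $\mathbb{N}$-grading degree, not the $L_0$-eigenvalue, so it is \emph{trivial} on $W_0^\bullet$ and the source descends to an honestly constant sheaf on $\overline{\mathcal{M}}_{0,n}$ --- there is no twist by $\psi$-classes. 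A twist by "$h_i$-th powers of $\psi$" would not even define a sheaf when the conformal dimensions $h_i$ are non-integral (the generic situation here, handled for the coinvariants themselves via a root-stack construction), so the appeal to semiampleness of $\psi$-classes does not rescue the argument.
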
 

Here we assume that all VOAs are of CFT-type. By \cite{BongLian} the VOAs in Theorem \ref{thm:GG} are quotients of the affinization of a not necessarily reductive Lie algebra structure on their degree $1$ component (see Remark \ref{rmk:BLian}). By \cite{DongMasonIntegrability}, if $V$ is strongly rational (rational, $C_2$-cofinite, simple, and self-contragredient), then $V\cong \bigotimes_{i=1}^rL_{\ell_i}({\mathfrak{g}_i})$, with $\mathfrak{g}_i$ simple Lie algebras,  $\ell_i \in \mathbb{Z}_{>0}$, and $V_1\cong \bigoplus_{i=1}^r\mathfrak{g}_i$.
In Theorem \ref{thm:GG}, $V$ need not be simple, $C_2$-cofinite, or rational, and may for instance be applied to  $L_{\ell}(\mathfrak{g})$, for $\mathfrak{g}$ simple and such that $\ell$ is admissible but not in $\mathbb{Z}_{>0}$. Such VOAs $L_{\ell}(\mathfrak{g})$ are not $C_2$-cofinite, but are quasi-lisse, a natural generalization of $C_2$-cofiniteness, introduced in \cite{ArakawaKawasetsu}.  It follows from \cite[Main Theorem]{ArakawaAffine}, that simple admissible highest weight modules over $L_{\ell}(\mathfrak{g})$ have rational conformal weights, as do more general $V$-modules (see Remark \ref{rmk:integralweights}). In particular, their associated sheaves of coinvariants are defined on $\overline{\mathcal{M}}_{0,n}$ (see Remark \ref{rmk:Descent}).  As in \cite{BongLian}, there are many other examples to which Theorem 1 applies.

By Corollary \ref{cor:fingen}, sheaves described in Theorem 1 are coherent. This improves \cite{an1} giving coherence  on $\mathcal{M}_{0,n}$ for $C_2$-cofinite and self-contragredient $V$, and \cite{DGK} where coherence was proved on $\overline{\mathcal{M}}_{0,n}$ for $C_2$-cofinite $V$.   By Corollary \ref{cor:vb}, such sheaves are vector bundles on $\mathcal{M}_{0,n}$. If $V$ is  $C_2$-cofinite and rational, by \cite{dgt2} these are vector bundles on  $\overline{\mathcal{M}}_{g,n}$, and by Corollary \ref{cor:vbBar}, these vector bundles are globally generated on  $\overline{\mathcal{M}}_{0,n}$, extending \cite{tuy, fakhr}.

While we haven't found conditions to guarantee global generation for $g > 0$, or for VOAs which are not strongly generated in degree 1, to illustrate the subtlety of this problem, we give several representative examples, including: \begin{itemize}
 \item globally generated and positive bundles of coinvariants  (see \S \ref{Holomorphic}, \S \ref{UnitaryVir},  and \S \ref{LatticeVOAs}); and
  \item sheaves of coinvariants that are not globally generated (see \S \ref{UnitaryVir}, and \S \ref{LatticeVOAs});
\end{itemize} 
Influenced by these,  we ask questions and pose potential extensions of Theorem \ref{thm:GG} (see \S \ref{Questions}). 

\smallskip
We next describe our methods, and our findings in more detail.  
\smallskip

Given $n$ simple admissible modules $W^i$ over a vertex operator algebra $V$ of CFT-type, we define a constant sheaf of finite rank, and a morphism of sheaves \eqref{eq:CSheaf} from it to the sheaf of coinvariants (Lemma \ref{CLemma}). Each simple admissible  $V$-module $W^i$  is a direct sum of vector spaces $W^i_d$, graded by the natural numbers, and the fibers of the constant sheaf are isomorphic to the tensor product of the lowest weight spaces $\bigotimes_{i}W^i_0$.  

To prove the sheaf of coinvariants is globally generated, we show  the map  \eqref{eq:CSheaf}  from Lemma \ref{CLemma} is surjective.  For this,  it suffices to prove the induced map on fibers is so, and to achieve this, we use a filtration induced by the grading on $W^\bullet$ by degree.  In particular, we show that all positive degree elements come  from the degree zero part of the filtration, which is naturally a quotient of the constant sheaf.  Crucial to our argument is  Zhu's result that  any simple admissible $V$-module is generated  in degree zero over $V$ \cite[Theorem 2.2.2]{ZhuModular}. 

The proof of surjectivity of \eqref{eq:CSheaf} restricted to fibers at smooth pointed curves is a reinterpretation of the core of the argument of  Tsuchiya, Ueno, and Yamada \cite[Proposition   2.3.1]{tuy},  that implies global generation of bundles defined by affine Lie algebras at integrable levels $\ell \in \mathbb{Z}_{>0}$  on $\mathcal{M}_{0,n}$, as their analysis has essential features in common with ours.  However, at pointed curves with singularities,  due to differences in the definitions of the Lie algebras used to define the coinvariants in these two settings, our proof of surjectivity is considerably more involved than Fakhruddin's proof  \cite{fakhr} of global generation of $\mathbb{V}_0(L_{\ell}({\mathfrak{g}}),W^{\bullet})$ on $\overline{\mathcal{M}}_{0,n}$ for  $\ell \in \mathbb{Z}_{>0}$.

The terms we refer to are defined in \S \ref{sec:Background}, and in \S \ref{sec:coinv} we construct the constant bundle, proving Lemma \ref{CLemma}. Theorem \ref{thm:GG} is proved in \S \ref{ProofThm}, after preparation is given in \S \ref{sec:MainPrep}. Proofs of the Corollaries  are given in \S \ref{sec:corollaries}.  We remark that Corollary \ref{cor:fingen} follows primarily from the proof of Theorem \ref{thm:GG}, while Corollary \ref{cor:vb} is obtained by combining Corollary \ref{cor:fingen} with results from \cite{dgt1}.  Corollary \ref{cor:vbBar} follows from Theorem \ref{thm:GG} and results from \cite{dgt2}.

 If $\mathbb{V}_g(V, W^{\bullet})$ is a globally generated bundle on $\overline{\mathcal{M}}_{g,n}$ for $g>0$, as we next explain, it is necessary that $V$ has non-negative central charge.  Globally generated sheaves will have positive rank, and first Chern classes will be nef  (a divisor is nef if it non-negatively intersects all curves).  Suppose $V$ is $C_2$-cofinite, and rational so  $\mathbb{V}_g(V, W^{\bullet})$ is known to be a vector bundle \cite{dgt2}. If also self-contragredient, then by  \cite[Corollary  2]{dgt3},  first Chern classes of  $\mathbb{V}_g(V, W^{\bullet})$ are linear combinations of tautological classes, including $\lambda$ (the first Chern class of the Hodge bundle).  By \cite[Theorem 2.1]{GKM}, for $g\ge 2$, the coefficient of $\lambda$ is a rational expression involving the rank of the bundle and the central charge of $V$. In particular, the central charge must be nonnegative for $\mathbb{V}_g(V, W^{\bullet})$ to be nef. While not neccessary, strong unitarity of $V$ is sufficient to guarantee that the central charge is positive, and such examples are therefore of interest.

In \S \ref{Questions} we discuss  questions inspired by a number of examples we have studied, represented in simple cases here.  The leitmotif  is that sheaves of coinvariants derived from vertex operator algebras related to affine VOAs seem to be geometric in nature.  
For instance, in Question \ref{Q5} we ask whether $\mathbb{V}_g(\bigotimes_{j=1}^r L_{\ell_j}({\mathfrak{g}_j}), \bigotimes_{j=1}^r W^{\bullet}_{j})$ is isomorphic to $\bigotimes_{j=1}^r\mathbb{V}_g(L_{\ell_j}({\mathfrak{g}_j}), W^{\bullet}_{j})$. In this case, dual sheaves of coinvariants could be identified with generalized theta functions, and would be subject to strange dualities (see Remark \ref{SD}). 
 
Our remaining questions are about positivity. As illustrated in examples given in \S \ref{UnitaryVir},  in cases where the rank of the constant bundle $\mathcal{W}_0^\bullet$ was at least as large as the rank of the sheaf of coinvariants on $\overline{\mathcal{M}}_{0,n}$,   the latter had positivity properties, and otherwise were positive if and only if an integral degree condition was satisfied (see Definition \ref{IC}).  In particular, since Chern classes of globally generated bundles are base point free, we were able to establish non global-generation by checking that if $n=4$, the degree of the sheaf was negative, or for $n\ge 5$ that its first Chern class was not nef.

In \S \ref{Holomorphic} we consider a class line bundles defined by holomorphic VOAs and the affine VOA given by $E_8$. The associated sheaves $\mathcal{W}^\bullet_0$  also have rank one.   While Theorem \ref{thm:GG} does not apply in positive genus, we can see these bundles are globally generated on $\overline{\mathcal{M}}_{g,n}$ for $g>0$.
 
Tools from VOA theory like factorization, Zhu's algebra, and Zhu's character formula, can often help one compute Chern classes, and the ranks of the sheaves of coinvariants and of $\mathcal{W}^{\bullet}_0$, the latter of which is determined by the dimensions of the degree zero components of the module. 
 
In \cite{ZhuModular}, Zhu introduced an associative algebra $A(V)$, and established functors between categories of  $A(V)$-modules and $V$-modules.  By \cite[Theorem 2.2.2]{ZhuModular}, any simple admissible $V$-module $W=\bigoplus_{d\in \mathbb{N}}W_d$-corresponds to the simple $A(V)$ module $W_0$. In particular, if $A(V)$ is commutative, then $W_0$ is one dimensional, and the constant sheaf has rank one. Minimal series principal $W$-algebras $\mathcal{W}^{\ell}(\mathfrak{g})$ and their simple quotients $\mathcal{W}_{\ell}(\mathfrak{g})$ have commutative Zhu algebras (see Example \ref{eg:UnitaryW}).  These rational, $C_2$-cofinite, self-contragredient VOAs have appeared prominently in the literature and are related to other important families of VOAs.  For instance, given $\ell \in \mathbb{Z}_{\ge 2}$, and $r=-\ell+\frac{\ell+1}{\ell+2}$, there is a well-known isomorphism of  $\mathcal{W}_r(\mathfrak{sl}_2, \ell)$ and the Parafermion algebras ${\rm{K}}(\mathfrak{sl}_2, \ell)$ first proven in  \cite{ArakawaLamYamada:2019:parafermion}. A complete list of such isomorphisms between $\mathcal{W}_r(\mathfrak{sl}_n, \ell)$ and ${\rm{K}}(\mathfrak{sl}_2, \ell)$, for any $r$ and $n$, is given in \cite[Theorem 10.3]{Linshaw:2021:universal}. Both $\mathcal{W}^{\ell}(\mathfrak{g})$ and $\mathcal{W}_{\ell}(\mathfrak{g})$ can be realized as cosets of tensor products of affine vertex operator algebras (proved for simply laced $\mathfrak{g}$ in  \cite{ArakawaLamYamada:2019:parafermion, ACL}, and for non-simply laced $\mathfrak{g}$  in \cite{creutzig2021trialities}).   In \S \ref{UnitaryVir}, we consider bundles of coinvariants defined by modules over ${\rm{K}}(\mathfrak{sl}_2, 2)$ equal to the Virasoro VOA $\textit{Vir}_{\frac{1}{2}}$. 

Many VOAs do not have a commutative Zhu algebra.  For instance, generally speaking, affine Lie algebras and even lattice VOAs do not.  One can often compute the rank of the constant bundle $\mathcal{W}_0^\bullet$  using Zhu's chararacter formula, which involves modular forms, as we explain  in \S \ref{LatticeVOAs}.  There we consider sheaves on $\overline{\mathcal{M}}_{0,n}$ generated by modules  over even lattice vertex algebras, which have rank one by \cite{dgt3}. Except in special cases, even lattice VOAs do not coincide with, nor are they constructed from, affine VOAs.  We see that depending on how the modules are chosen, first Chern classes may be negative (so sheaves are not globally generated), zero (so sheaves are constant), and positive (so sheaves are possibly globally generated).

\subsection*{Acknowledgements}We thank Thomas Creuzig, Bin Gui, Yi-Zhi Huang, Jim Lepowsky, Nicola Tarasca, and Simon Wood for answering our questions and for useful discussions. We thank Prakash Belkale, Ron Donagi, and Najmuddin Fakhruddin for comments on a draft of this manuscript. We are truly grateful to a very knowledgeable anonymous referee who informed us of and clarified a number of developments in the VOA literature.

\section{Background}\label{sec:Background} 

Following  \cite{flm, tuy, bzf, dgt2} we briefly give notation and results used here.
We also recommend \cite{dgt3, DGK}, which were written primarily for algebraic geometers.

\subsection{Virasoro (Lie) algebra}\label{Vir}

The {Virasoro (Lie) algebra} $\Vir$ is a one dimensional central extension of the Witt (Lie) algebra $\Der\, \mathcal{K}$.   The Witt algebra represents the functor which assigns to a $\mathbb{C}$-algebra $R$ the Lie algebra $\Der\,\mathcal{K}(R):=R(\!( z)\!) \partial_z$  generated over $R$ by the derivations $L_p:=-z^{p+1}\partial_z$, for $p\in \mathbb{Z}$, with  relations $[L_p, L_q] = (p-q) L_{p+q}$. In particular, we can view the Lie algebra $\Vir$ as representing the functor which assigns to $R$ the Lie algebra generated over $R$ by a formal vector $K$ and the elements $L_p$, for $p\in \mathbb{Z}$, with Lie bracket given by $[K, L_p]=0$ and $[L_p, L_q] = (p-q) L_{p+q} + \frac{K}{12} (p^3-p)\delta_{p+q,0}$.

\subsection{Vertex operator algebras}\label{sec:VOAs}\label{sec:defvoa}

By a {\textit{vertex operator algebra}}, we mean a four-tuple $(V, {\textbf{1}}, \omega, Y(\cdot,z))$, where:
\begin{enumerate}
\item $V=\bigoplus_{i\in \NN} V_i$ is a non-negatively graded  $\mathbb{C}$--vector space with $\dim V_i<\infty$;
\item ${\textbf{1}}$ is an element in $V_0$, called the \textit{vacuum vector};
\item $\omega$ is an element in $V_2$, called the \textit{conformal vector};
\item $Y(\cdot,z)\colon V \rightarrow  \textrm{End}(V)\llbracket z,z^{-1} \rrbracket$, a linear map taking $A \in V$, to $Y(A,z) :=\sum_{i\in\mathbb{Z}} A_{(i)}z^{-i-1}$. The series $Y(A,z)$ is called the \textit{vertex operator} assigned to $A$.
\end{enumerate}

The datum $(V, {\textbf{1}}, \omega, Y(\cdot,z))$ is required to satisfy four axioms which we state in \cite{dgt2}, including axioms that vertex operators satisfy a weak version of commutativity and a weak version of associativity. One may regard this as framing a VOA as generalizing a commutative associative algebra. We highlight here the main properties that we will use in this paper and we refer to \cite{dgt1,dgt2,flm} for more details.
\begin{enumerate}[label=(\roman*)]
\item {\textit{Conformal structure:}} the Virasoro algebra acts on $V$ through the identification $L_{p}=\omega_{(p+1)}$ and $K= c_V \text{Id}_V$ for some complex number $c_V$ called central charge of $V$.
\item {\textit{Vacuum axiom:}} $Y(\mathbf{1},z)= \text{Id}_V$.
\item {\textit{Graded action:}} if $A \in V_k$, then $A_{(j)}V_\ell \subseteq V_{\ell + k -j-1}$.
\item {\textit{Commutator formula:}} \label{it:commV} $[A_{(i)},B_{(j)}] =\sum_{k\ge 0}{\binom{i}{k}} \left(A_{(k)}(B)\right)_{(i+j-k)}$.
\item  {\textit{Associator formula:}} \label{it:assocV} $\left(A_{(i)}(B)\right)_{(j)} =\sum_{k\ge 0}(-1)^k {\binom{i}{k}}\left(A_{(i-k)}B_{(j+k)}-(-1)^iB_{(i+j-k)}A_{(k)}\right)$.
\end{enumerate}

\noindent
A VOA $V$ is {\em{strongly rational}}, or {\em{of CohFT-type}},  if $V$ is simple, self-contragredient, and if $V$ is:\begin{enumerate}[label=(\Roman*)]
\item \label{CohFTI}{\textit{of CFT-type:}} $V=\bigoplus_{i\in \NN}V_i$ with $V_0\cong \mathbb{C} \mathbf{1}$; 
\item \label{CohFTII} {\textit{rational:}} There are finitely many simple $V$-modules, and every finitely generated module is a direct sum of simple modules; 
\item \label{CohFTIII}$C_2$-{\textit{cofinite:}} The space $C_2(V):=\mathrm{span}_{\mathbb{C}}\left\{A_{(-2)}B \,:\, A, B \in V\right\}$ has finite codimension in $V$. 
\end{enumerate}

\subsection{\texorpdfstring{$V$}{V}-Modules}\label{sec:VMOD}
By a $V$-module $W$, we mean what in the literature is known as an \textit{admissible $V$-module}, that is a pair $(W, Y^W(-,z))$ consisting of  \begin{enumerate}
    \item an $\NN$-graded vector space $W=\bigoplus_{i \geq 0} W_i$ with $\dim(W_i)<\infty$ and $W_0 \neq 0$;
    \item a linear map $Y^W(-, z) \colon V \to \End(W)\llbracket z,z^{-1}\rrbracket$,  $A \mapsto Y^W(A,z) = \sum_{i \in \ZZ} A_{(i)}^Wz^{-i-1}$. 
\end{enumerate} In order for this pair to define an admissible $V$-module, certain axioms need to hold \cite{dgt2,fhl,dl}. Instead of reporting all the properties that $(W, Y^W(z,-))$ must satisfy, we list here only those that will be used in this paper (see \cite{dgt2} for more details).  These are: 
\begin{enumerate}[label=(\roman*)]
\item {\textit{conformal structure:}}  the Virasoro algebra acts on $W$ through the identification $L_p \cong \omega^W_{(p+1)}$.
\item {\textit{vacuum axiom:}}  $Y^W(\mathbf{1},z)= \text{Id}_W$.
\item {\textit{graded action:}}  if $A \in V_k$, then $A_{(j)}^WW_\ell \subseteq W_{\ell + k -j-1}$ and we write $\deg(A_{(j)}^W)= \deg(A)-j-1$.
\item {\textit{commutator formula:}} \label{it:commVmod} $[A^W_{(i)},B^W_{(j)}] =\sum_{k\ge 0}{\binom{i}{k}}\left(A_{(k)}(B)\right)^W_{(i+j-k)}$.
\item  {\textit{associator formula:}} \label{it:assocVmod} $\left(A_{(i)}(B)\right)^W_{(j)} =\sum_{k\ge 0}(-1)^k {\binom{i}{k}}\left(A^W_{(i-k)}B^W_{(j+k)}-(-1)^iB^W_{(i+j-k)}A^W_{(k)}\right)$.
\end{enumerate}
In what follows the endomorphism $A^W_{(j)}$ will simply be denoted by $A_{(j)}$. It is important to observe that $V$ is a $V$-module and that the commutator and associator formulas for $V$ and for $V$-modules both arise from the \textit{Jacobi identities} for $V$ and for $V$-modules. Moreover, when $W$ is a simple $V$-module, then there exists an element $\alpha \in \CC$ called the {\it{conformal dimension of $W$}}, such that $L_0(w)=(\alpha + \deg(w)) w$ for every homogeneous element $w \in W$. 

\begin{definition}\label{IC} An $n$-tuple $(W^1,\ldots,W^n)$ of admissible $V$-modules $W^i$ of conformal dimension $\alpha_i$ is said to satisfy the integrality condition, or integrality property, if the sum of conformal dimensions $\sum_{i=1}^n\alpha_i$ is an integer (which can be zero).
\end{definition}

\subsection{Strong finite generation}\label{sec:SGD1} 
A vertex algebra $V$ is called \textit{finitely strongly generated} if there exists finitely many elements $A^1$, $\ldots$, $A^r \in V$  such that $V$ is spanned by the elements of the form 
\[A^{i_1}_{(-n_1)} \cdots A^{i_r}_{(-n_r)} \bf{1},\]
with $r\ge 0$ and $n_i \ge 1$ (see \cite{ArakawaStrong}). We say that $V=\bigoplus_{i\in \mathbb{N}}V_i$ is \textit{strongly generated in degree $d$} if it is possible to choose the generators $A^{i_j}$ to be in $V_m$ for $m \le d$.

By \cite[Proposition 2.5]{Liu}, one has that $V$ is finitely strongly generated if and only if $V$ is $C_1$-cofinite.  If $V$ is $C_2$-cofinite, then it is $C_1$-cofinite. However, the quasi-lisse, but not $C_2$-cofinite affine VOAs defined by simple Lie algebras and admissible, non-integral levels are strongly finitely generated in degree 1.

\begin{remark}
\label{rmk:BLian}
VOAs of CFT-type,
strongly generated in degree 1 were classified in \cite{BongLian}.  More generally, for 
any so-called preVOA $V$ of CFT-type by \cite[Theorem 3.7]{BongLian}, the degree 1 component $V_1$ has the structure of a Lie algebra, with bracket $[A,B]=A_{(0)}(B)$. This Lie algebra $(V_1, [\ , \ ])$, which need not be simple, or reductive, is equipped with a symmetric invariant
bilinear form $<A \ , \ B>=A_{(1)}(B)$. Roughly speaking, in the terminology of \cite{BongLian}, a preVOA satisfies many of the properties of a VOA except
those involving a conformal vector.  Given any pair consisting of a Lie algebra $\mathfrak{g}$ and symmetric invariant bilinear form $< \ , \ >$,  Lian defines the affinization, and proves in
\cite[Theorem 4.11]{BongLian} that for any preVOA $V$ of CFT-type, if strongly generated in degree $1$, then $V$ is isomorphic to a quotient of 
the affinization of $(V_1,< \ , \ >)$ by some ideal.  The last step in the classification is to determine 
which preVOAs admit a Virasoro vector, and have the structure of a VOA. He classifies such Virasoro vectors
(see \cite[Corollary 6.15]{BongLian}).  As pointed out to us by a referee of our paper (who told us of this work), the most interesting aspect of \cite{BongLian} is that this
class of examples is much richer than the affinizations of reductive Lie algebras and their 
quotients. New examples are given in \cite[\S 6.4]{BongLian}.
\end{remark}

\subsection{Coordinatized curves}\label{LieDefs}
As the sheaf of coinvariants on $\overline{\mathcal{M}}_{g,n}$, the constant sheaf constructed in Lemma \ref{CLemma} is defined first on a covering $\tMgn[n]$, of $\overline{\mathcal{M}}_{g,n}$, and then descended to $\overline{\mathcal{M}}_{g,n}$ along two maps which we  recall here.  At the second step, one
 applies Tsuchimoto's method, as in \cite{dgt1} and \cite{dgt2}.  By $\tMgn[n]$ we mean the moduli space of triples $(C, P_{\bullet}, t_{\bullet})$, where $(C, P_{\bullet}) \in \overline{\mathcal{M}}_{g,n}$, and  $t_{\bullet}$ is an $n$-tuple of formal coordinates $t_i$ at each of the marked points $P_i$. This space is described in detail in \cite[\S 2.2.2]{dgt2}. To understand the maps along which the two sheafs are to descend, we next describe the group scheme $\mathrm{Aut}\,\mathcal{O}$, and the varieties ${\mathscr{A}}\textit{ut}_{C/S}$ on which it acts.

Consider the functor which assigns to a $\mathbb{C}$-algebra $R$ the group:
\[
\mathrm{Aut}\,\mathcal{O}(R) = \left\{z \mapsto \rho(z)= a_1 z + a_2 z^2 + \cdots \, | \, a_i \in R, \, a_1 \textrm{ a unit}  \right\}
\]
of continuous automorphisms of the algebra $R\llbracket z \rrbracket$ preserving the ideal $zR\llbracket z \rrbracket$. The  group law  is given by composition of series: $\rho_1 \cdot \rho_2 := \rho_2\circ \rho_1$. This functor is represented by a group scheme, denoted $\text{Aut}\,\mathcal{O}$.  

First suppose $C$ is a smooth curve and let $\mathscr{A}\textit{ut}_C$ be the smooth variety whose set of points are pairs $(P,t)$, with $P \in C$, $t \in \widehat{\mathscr{O}}_P$ and $t\in \mathfrak{m}_P\setminus \mathfrak{m}^2_P$, a formal coordinate at $P$. Here $\mathfrak{m}_P$ is the maximal ideal of $\widehat{\mathscr{O}}_P$, the completed local ring at the point $P$. There is a simply transitive right action of $\mathrm{Aut}\,\mathcal{O}$ on  $\mathscr{A}\textit{ut}_C\rightarrow C$,  given by changing coordinates: 
\[
\mathscr{A}\textit{ut}_C \times \mathrm{Aut}\,\mathcal{O} \rightarrow \mathscr{A}\textit{ut}_C, \qquad \left((P,t),\rho\right) \mapsto \left(P, t\cdot \rho := \rho(t)\right),
\]
making $\mathscr{A}\textit{ut}_C$  a  principal $(\mathrm{Aut}\,\mathcal{O})$-bundle on~$C$. A choice of  formal coordinate  at $P$ gives a trivialization
\[
\mathrm{Aut}\,\mathcal{O}\xrightarrow{\simeq_t}\mathscr{A}\textit{ut}_P, \qquad \rho\mapsto \rho(t).
\]
If $C$ is a \textit{nodal} curve, then to define a principal $(\mathrm{Aut}\,\mathcal{O})$-bundle on $C$ one may give a principal $(\mathrm{Aut}\,\mathcal{O})$-bundle on its normalization, together with a gluing isomorphism between the fibers over the preimages of each node. For simplicity, suppose $C$ has a single node $Q$, and let $\widetilde{C}\rightarrow C$ denote its normalization, with  $Q_+$ and $Q_{-}$ the two preimages of $Q$ in $\widetilde{C}$. A choice of formal coordinates $s_{\pm}$ at $Q_{\pm}$, respectively, determines a smoothing of the nodal curve $C$ over $\mathrm{Spec}(\mathbb{C}\llbracket q\rrbracket)$ such that,  locally around the point $Q$ in $C$, the family is defined by $s_+ s_- = q$.  One may identify the fibers at $Q_{\pm}$  by the  gluing isomorphism induced from the identification $s_+=\gamma(s_-)$:
\begin{equation*}
\label{eq:glisomAut}
\mathscr{A}ut_{Q_{+}} \simeq_{s_+} \mathrm{Aut}\,\mathcal{O}  \xrightarrow{\cong} \mathrm{Aut}\,\mathcal{O} \simeq_{s_-} \mathscr{A}ut_{Q_{-}},
\qquad  \rho(s_+)\mapsto \rho \circ \gamma (s_-),
\end{equation*}
where $\gamma\in \mathrm{Aut}\,\mathcal{O}$ is the involution defined as
\begin{equation*}
\label{eq:muz}
\gamma(z) := \frac{1}{1+z}-1 = -z+z^2-z^3+\cdots.
\end{equation*} This may be carried out in families to define $\mathscr{A}ut_{C/S}\rightarrow C/S$.  The identification of the universal curve $\overline{\mathcal{C}}_g \cong \overline{\mathcal{M}}_{g,1} \rightarrow \overline{\mathcal{M}}_{g}$ leads to the  principal $(\mathrm{Aut}\,\mathcal{O})$-bundle $\widetriangle{\mathcal{M}}_{g,1}\rightarrow \overline{\mathcal{M}}_{g,1}$.

The group scheme $\mathrm{Aut}_+\mathcal{O}$ represents the functor assigning to a $\mathbb{C}$-algebra $R$ the group:
\[
\mathrm{Aut}_+\mathcal{O}(R) = \left\{z \mapsto \rho(z)= z + a_2 z^2 + \cdots \, | \, a_i \in R \right\},
\]
and one has $\mathrm{Aut}\,\mathcal{O} =\mathbb{G}_m \ltimes \mathrm{Aut}_+\mathcal{O}$. In particular,  by \cite{dgt1}, the projection $\widetriangle{\mathcal{M}}_{g,n} \rightarrow \overline{\mathcal{M}}_{g,n}$ is an the $(\mathrm{Aut}\,\mathcal{O})^{\oplus n}$-torsor, and factors as the composition of an $(\mathrm{Aut}_+\mathcal{O})^{\oplus n}$-torsor and a $\mathbb{G}_m^{\oplus n}$-torsor:
\begin{equation*}
\label{eq:torsors}
\xymatrix{
\widetriangle{\mathcal{M}}_{g,n} \ar[rrd]_-{(\mathrm{Aut}_+\mathcal{O})^{\oplus n}\quad} \ar[rrrr]^-{(\mathrm{Aut}\,\mathcal{O})^{\oplus n}} &&&& \overline{\mathcal{M}}_{g,n}\\
&&\overline{\mathcal{J}}_{g,n} \ar[urr]_-{\mathbb{G}_m^{\oplus n}}}
\end{equation*}
\noindent
where $\overline{\mathcal{J}}_{g,n}$ parametrizes objects of type $(C,P_\bullet, \tau_\bullet)$, where $(C,P_\bullet)$ is a stable $n$-pointed genus $g$ curve, and $\tau_\bullet=(\tau_1,\dots,\tau_n)$ with $\tau_i$ a non-zero $1$-jet of a formal coordinate at $P_i$, for each $i$.

\subsection{Lie algebras that act}\label{sec:Coinvariants} To define the sheaf of coinvariants $\mathbb{V}(V;W^\bullet)$ on $\tMgn[n]$, one uses the sheaf of ancillary Lie algebras $\mathfrak{L}(V)^n$, and the sheaf of chiral Lie algebras $\mathcal{L}_{\widetriangle{C}_{g,n} \setminus P_\bullet}(V)$, each of which acts on the tensor product $W^{\bullet}=\bigotimes_{i}W^i$.   The fiber of $\mathfrak{L}(V)^n$ over $(C,P_\bullet, t_\bullet)$ is  given by the direct sum $\bigoplus_{i=1}^n\mathfrak{L}_{P_i}(V)$ of the ancillary Lie algebras
\[\mathfrak{L}_{P_i}(V) :=\dfrac{V \otimes \CC(\!(t_i)\!)}{\text{Im} \nabla} \qquad \text{ with } \nabla = L_{-1} \oplus \partial_{t_i}
\]
with Lie bracket given by
$[A_{[j]}, B_{[k]}]=\sum_{\ell\ge 0}{\binom{j}{\ell}}(A_{(\ell)}(B))_{[j+k-\ell]}$,
where $A_{[{j}]}$ is the class of the element $A\otimes t_i^{j}$ in $\mathfrak{L}_{P_i}(V)$. In particular, the diagonal action of $\bigoplus_{i=1}^n\mathfrak{L}_{P_i}(V)$ on $W^\bullet \otimes \Oo_{\tMgn[n]}$ is induced by the map $\mathfrak{L}_{P_i}(V) \to \End(W^i)$ that takes $A_{[j]}$ to the endomorphism $A^{W^i}_{(j)}$. The degree of every element of $\mathfrak{L}_{P_i}(V)$ is identified with the degree of the associated endomorphism, that is for homogeneous elements $A \in V$ and $j \in \ZZ$ we have $\deg(A_{[j]})= \deg(A)-j-1$.

By \cite{dgt2} there is a coordinate independent version of $\mathfrak{L}_{P_i}(V)$ which we briefly recount. One can define a coordinate independent version of the ancillary Lie algebras as well as give a description of the sheaf of chiral Lie algebras, and their actions on $W^{\bullet}$, using the sheaf of vertex algebras $\mathcal{V}_C$ on the  curve $C$.   The sheaf $\mathcal{V}_C$ was originally defined on smooth curves in \cite{bzf}. To define the sheaf $\mathcal{V}_C$ on a nodal curve $C$, it is enough to define it on open subsets which do not include nodes, and then for each node $Q$, define the sheaf on the normalization of the curve, together with specifying isomorphisms of fibers over the preimages of each node.  This is explained in detail in \cite[\S 2.5]{dgt2}, where it is shown that $\mathcal{V}_C$ is a sheaf of $\mathcal{O}_C$-modules. 
 
If $t_i$ is a local coordinate at $P_i$, then the ancillary Lie algebra of $V$ at $P_i$ is isomorphic to 
\[\text{H}^0\left(D^\times_{P_i}, \mathcal{V}_{C}\otimes\omega_{{C}}/\textrm{Im}\,\nabla\right) \xrightarrow{\; \simeq_{t_i} \;}  \mathfrak{L}_{P_i}(V),\]
and the chiral Lie algebra for $(C,P_{\bullet})$ is defined to be
\[
\mathcal{L}_{C\setminus P_\bullet}(V) := \text{H}^0\left(C\setminus P_\bullet, \mathcal{V}_C\otimes \omega_C/\textrm{Im}\nabla \right), 
\]
and one has a map of sheaves of Lie algebras given by restriction:
\begin{equation}
\label{chiraltoLQp}
\xymatrix{\mathcal{L}_{C\setminus P_\bullet}(V) \ar[r]& \bigoplus_{i =1}^{n}\text{H}^0\left(D^\times_{P_i}, \mathcal{V}_{\widetilde{C}}\otimes\omega_{\widetilde{C}}/\textrm{Im}\,\nabla\right) \ar[r]^-{\simeq_{t_i}} & \bigoplus_{i =1}^{n} \mathfrak{L}_{P_i}(V).}
\end{equation}
In what follows, the image of $\sigma$ in $\bigoplus_{i=1}^{n} \mathfrak{L}_{P_i}(V)$ will be denoted $(\sigma_{P_1}, \dots, \sigma_{P_n})$.

\subsection{Coinvariants in two steps}\label{sec:CoinvDefs} We describe how the sheaf of coinvariants, first defined over $\tMgn[n]$, descends to a sheaf over $\bMgn[n]$. Details of this construction can be found in \cite{dgt2}. 

\subsubsection{}\label{desc} The sheaf of Lie algebras $\mathcal{L}_{\widetriangle{C}_{g,n} \setminus P_\bullet}(V)$ on $\tMgn[n]$  acts on the constant  bundle $W^\bullet \otimes \Oo_{\tMgn[n]}$ so that $\mathcal{L}_{\widetriangle{C}_{g,n} \setminus P_\bullet}(V) \cdot (W^\bullet \otimes \Oo_{\tMgn[n]})$ is a sub-module of $W^\bullet \otimes \Oo_{\tMgn[n]}$.  The sheaf of coinvariants  is  the quotient
\begin{equation}\label{TCoinvariants} \widetriangle{\VV}_g(V;W^\bullet) = \dfrac{W^\bullet \otimes \Oo_{\tMgn[n]}}{\mathcal{L}_{\widetriangle{C}_{g,n} \setminus P_\bullet}(V)(W^\bullet \otimes \Oo_{\tMgn[n]})}.
\end{equation}

We recall that the transitive action given by changing coordinates gives  $\tMgn[n]$ the structure of a principal $({\Aut}\,\mathcal{O})^n$-bundle over $\bMgn[n]$, where  $\pi: \tMgn[n] \longrightarrow \overline{\mathcal{M}}_{g,n}$, is the forgetful map (see \S \ref{LieDefs}).  The actions of $(\Aut \Op)^n$ and of  $\mathcal{L}_{\widetriangle{C}_{g,n} \setminus P_\bullet}(V)$ on $W^\bullet \otimes \Oo_{\tMgn[n]}$ are compatible  \cite{dgt2}, and the action of  $(\Aut \Op)^n$ on $W^\bullet \otimes \Oo_{\tMgn[n]}$ preserves the submodule  $\mathcal{L}_{\widetriangle{C}_{g,n} \setminus P_\bullet}(V)(W^\bullet \otimes \Oo_{\tMgn[n]})$,  inducing an action of $(\Aut \Op)^n$ on $\widetriangle{\VV}_g(V;W^\bullet)$. The sheaf of coinvariants on $\bMgn[n]$ is then defined to be 
\begin{equation*}\label{eq:Sheaf} \VV_g(V;W^{\bullet}) := \left(\pi_*\widetriangle{\VV}_g(V;W^\bullet) \right)^{\Aut \Op^n}.
\end{equation*}

\subsubsection{}\label{desc2} More explicitely, the descent of coinvariants is carried out in two steps: 
\[ \xymatrix{\widetriangle{\mathcal{M}}_{g,n} \ar[rr]^-{\mbox{\tiny{step 1}}} && \overline{\mathcal{J}}_{g,n}  \ar[rr]^-{\mbox{\tiny{step 2}}} && \bMgn[n]}.\]

In the first step, the group scheme $\left(\mathrm{Aut}_+\mathcal{O}\right)^n$ acts equivariantly on $W^\bullet \otimes \Oo_{\tMgn[n]}$, and the quotient by this action descends to a vector bundle $\mathbb{V}^{\mathcal{J}}(V; W^\bullet)$ on $\mathcal{J}=\overline{\mathcal{J}}_{g,n} $, with fibers:
\begin{equation*}\label{Jbundle}
{\mathbb{V}}^{\mathcal{J}}(V; W^\bullet)_{(C,P_\bullet, \tau_\bullet)} = \bigotimes_{i=1}^n \, {W}_{P_i, \tau_i}^i \Big/  \mathcal{L}_{C\setminus P_\bullet}(V)\cdot \left( \bigotimes_{i=1}^n \, {W}_{P_i, \tau_i}^i \right).
\end{equation*} Here, ${W}_{P_i, \tau_i}^i$ is the coordinate-independent realization of the $V$-module $W^i$ assigned at $(P_i,\tau_i)$ as defined in \cite{dgt1}.  In the second step  we then  descend ${\mathbb{V}}^{\mathcal{J}}(V; W^\bullet)$ to $\bMgn[n]$.  The action  $\mathbb{G}_m^n\cong (\mathbb{C}^\times)^n$ is induced by the $\ZZ$-gradation of each $W^i$ and it is described as follows: for $(z_1,\dots,z_n) \in (\mathbb{C}^\times)^n$ and  $w^{\bullet}=w^1\otimes \cdots \otimes w^n \in \bigotimes_jW^j$, given by  homogeneous $w^i \in W$, we set
\begin{equation*}
\label{CactM}
(z_1,\dots, z_n) \cdot w^{\bullet}:= {z_1}^{-{\rm{deg}}(w^1)}w^1 \otimes {z_2}^{-{\rm{deg}}(w^2)}w^2 \otimes \cdots  \otimes {z_n}^{-{\rm{deg}}(w^n)}w^n.
\end{equation*}

When descending along the $\mathbb{G}_m^{\oplus n}$-torsor  to $\overline{\mathcal{M}}_{g,n}$, one applies the following method, inspired by Tsuchimoto in \cite{ts}, and used in  \cite{dgt1,dgt2}. This is explained in detail using a root stack in case the conformal dimensions of modules are rational in \cite[\S 8.7]{dgt2}, while a more general procedure without rationality assumption is described in \cite[Remark 8.7.3, (ii)]{dgt2}. 

\begin{remark}\label{rmk:integralweights} There are a number of sufficient conditions for $V$ that ensure that simple $V$-modules will have rational conformal dimension. For  $L_{k}(\mathfrak{g})$, where $\mathfrak{g}$ is a simple Lie algebra and $k$ is an admissible level that is not a positive integer, as mentioned in the introduction by \cite[Main Theorem]{ArakawaAffine}, the conformal weights of modules in category $\mathcal{O}$ are rational, as they 
are from (slightly) larger categories (that allow for dense modules, spectral flow twists and finite length extensions as studied in for instance in  \cite{CreutzigRidoutWood}) where the weights are determined by those in Arakawa's classification 
by formulas that preserve rationality. Rationality has also been shown using various types of modularity of characters in different contexts \cite{AMRationality, ZhuModular,  MiyamotoModular}. For instance, this is proved for $C_2$-cofinite VOAs in \cite[Corollary  5.10]{MiyamotoModular} using that the span over $\mathbb{C}$ of ordinary and pseudo trace functions is $\operatorname{SL}(2,\mathbb{Z})$ invariant. Admissible affine VOAs have the modular invariance property \cite{KacWakimotoModular}. 
\end{remark}

\subsection{Formulas for the rank} \label{sec:ranksformula} An important feature of the sheaves of coinvariants $\VV_g(V;W^\bullet)$ from a vertex algebra $V$ of CohFT-type, is that their rank can be computed by induction on the genus $g$. This is a consequence of the factorization theorem \cite{dgt2} and can be seen via the equality: 
\begin{equation*} \label{eq:factor1}
    {\rm rank}  \mathbb{V}_g(V; W^\bullet) = \sum_{W \in \mathcal{W}}  {\rm rank} \mathbb{V}_{g-1}(V; W^\bullet\otimes W \otimes W'),
\end{equation*} where the sum is over the finite set of simple admissible $V$-modules $\mathcal{W}$. That $\mathcal{W}$ is finite follows from the assumption that $V$ is rational as well as from the assumption that $V$ is $C_2$-cofinite. Moreover, for every non negative integer $i$ smaller than or equal to $g$ and for every partition $I \sqcup I^c$ of $\{1, \dots, n\}$, also the following equality holds \begin{equation}\label{eq:factor2}
    {\rm rank}  \mathbb{V}_g(V; W^\bullet) = \sum_{W \in \mathcal{W}}  {\rm rank} \mathbb{V}_{g-i}(V; W^I\otimes W ) \,  {\rm rank} \mathbb{V}_{i}(V; W^{I^c}\otimes W').
\end{equation}

\subsection{Formulas for first Chern classes in case \texorpdfstring{$V\cong V'$}{V=V'}} \label{sec:Chern} 

One can use first Chern classes to test the positivity of a given vector bundle. First Chern classes of globally generated vector bundles are base point free, and  they define nef divisors.  A divisor $D$ on a projective variety $X$ is nef if it nonnegatively intersects every curve on $X$.  We recall the formula for first Chern class derived in \cite[Corollary 2]{dgt3} of a vector bundle of coinvariants defined by  a rational, $C_2$-cofinite self-contragredient vertex operator algebra  $V$  of CFT-type, with central charge~$c$, and $n$ simple $V$-modules $W^i$ of conformal dimension $a_i$.
\begin{equation}\label{CC}
c_1\left(  \mathbb{V}_g(V; W^\bullet)  \right)= {\rm rank} \,\mathbb{V}_g(V;W^\bullet) \left( \frac{c}{2}\lambda + \sum_{i=1}^n a_i \psi_i \right)
-b_{\rm irr} \delta_{\rm irr} - \sum_{i,I} b_{i:I} \delta_{i:I},
\end{equation}
where
\[
 b_{\rm irr}=\sum_{W\in\mathcal{W}} a_W \, {\rm rank}\,\mathbb{V}_{g-1}(V; W^{\bullet}\otimes W \otimes W');\]
and 
\[b_{i:I} =\sum_{W\in\mathcal{W}} a_W \, {\rm rank}\,\mathbb{V}_{i}(V;W^I\otimes W) {\rm rank}\,\mathbb{V}_{g-i} (V; W^{I^c}\otimes W').  
\]
In the coefficients $b_{\rm irr}$ of the boundary divisor $\delta_{irr}$ and $b_{i:I}$ of $\delta_{i:I}$ in \eqref{CC}, we sum over the finite set of simple admissible $V$-modules $\mathcal{W}$. 

\subsection{Test curves}\label{sec:FCurves} 
$F$-Curves, which span $A_1(\bMgn[n])$,  are defined to be the numerical equivalence classes of the image of prescribed clutching maps from $\overline{\mathcal{M}}_{1,1}$, and    $\overline{\mathcal{M}}_{0,4}$.  A picture of all possible such maps, and formulas for the intersections of $F$-curves with divisors is given in \cite{GKM}.  The $F$-curves on $\overline{\mathcal{M}}_{0,n}$ are given by a partition $\{1,\ldots,n\} = N_1 \cup N_2 \cup N_3 \cup N_4$ into four nonempty sets, which determines a map from $\overline{\mathcal{M}}_{0,4}$ to $\overline{\mathcal{M}}_{0,n}$, where given $(C,q_{\bullet})\in \overline{\mathcal{M}}_{0,4}$  we obtain a point in $\overline{\mathcal{M}}_{0,n}$ by attaching to $q_i$ for $i\in \{1,\ldots, 4\}$, any stable $|N_i| +1$ pointed curve of genus $0$ by gluing $q_i$ to the $+1$ point.  The $F$-curve, denoted $F_{N_1,N_2,N_3,N_4}$,  is defined to be the numerical equivalence class of the image of this map.

\section{Constant sheaves associated to coinvariants} \label{sec:coinv}

Here we show how to associate to any $n$-tuple $(W^{1}, \ldots, W^n)$ of admissible $V$-modules, a sheaf $\mathcal{W}^{\bullet}_k$ based on the degree $k$ part of the standard filtration $\mathcal{F}_k(\bigotimes_jW^{j})$, defined in \S \ref{Filtration}. Each of these sheaves, considered in \S \ref{sec:ConstantBundle0}, descends from a constant sheaf on $\tMgn[n]$, and in  case $k=0$, remains constant.  In Example \ref{eg:UnitaryW}, we discuss the rank one constant sheaves associated to any $n$-tuple of representations over the minimal series principal $W$-algebra related in some cases to the parafermions and the discrete series Virasoro VOAs, considered in \S \ref{UnitaryVir}.

\subsection{Filtration}\label{Filtration}  The standard filtration on the sheaf of coinvariants  $\widetriangle{\VV}_g(V;W^\bullet)$ on $\tMgn[n]$ defined in \eqref{TCoinvariants} is induced from  a filtration on $\mathcal{L}_{C\setminus P_ \bullet}(V)$, given for $k\in \mathbb{N}$ by
\[
\mathcal{F}_k\,\mathcal{L}_{C\setminus P_ \bullet}(V) := \left\{\sigma \in \mathcal{L}_{C\setminus P_ \bullet}(V) \, | \, \deg \sigma_{P_i}\leq k, \, \mbox{for all $i$}  \right\},
\]
so $\mathcal{L}_{C\setminus P_ \bullet}(V)$ is a filtered Lie algebra. There is also a filtration on $W^\bullet$ defined for $k\in \mathbb{N}$ by
\[
\mathcal{F}_{k}W^{\bullet}=\bigoplus_{0\leq d \leq k}W^{\bullet}_d, \quad\mbox{ where }\quad
W^{\bullet}_d:=\sum_{d_1+\cdots+d_n=d} W^1_{d_1} \otimes \cdots \otimes W^n_{d_n}.
 \]
Since $\mathcal{F}_{k}\,\mathcal{L}_{C\setminus P_ \bullet}(V)\cdot \mathcal{F}_{\ell}W^{\bullet} \subset \mathcal{F}_{k+\ell}W^{\bullet}$, it follows that $W^\bullet$ is a filtered $\mathcal{L}_{C\setminus P_ \bullet}(V)$-module, and we set
\begin{equation*}
\mathcal{F}_{k} \left( W^\bullet_{\mathcal{L}_{C\setminus P_ \bullet}(V)}\right) :=
\left( \mathcal{F}_{k} W^\bullet + \mathcal{L}_{C\setminus P_ \bullet}(V)\cdot  W^\bullet\right)\big/\mathcal{L}_{C\setminus P_ \bullet}(V) \cdot W^\bullet.
\end{equation*}

\subsection{Sheaves \texorpdfstring{$\mathcal{W}^{\bullet}_k$}{Wk}}\label{sec:ConstantBundle0}

In this section we consider an $n$-tuple $(W^1,\ldots, W^n)$ of simple admissible $V$-modules.

\begin{lemma}\label{CLemma} Given $n$ admissible $V$-modules $W^j$, there is a natural map 
\begin{equation}\label{eq:CSheafJ} 
\phi^\mathcal{J} \, \colon \, (\mathcal{W}^{\bullet}_k)^\mathcal{J} = \left(\mathcal{F}_k(W^\bullet) \otimes \pi_* \ \Oo_{\tMgn[n]}  \right)^{\Aut_+\Op^n} \longrightarrow \  \left(\pi_* \ \widetriangle{\VV}_g(V;W^\bullet) \right)^{\Aut_+\Op^n} = {\VV}^\mathcal{J}(V;W^\bullet).
\end{equation}
from the sheaf $(\mathcal{W}^{\bullet}_k)^\mathcal{J}$ on $\mathcal{J}=\overline{\mathcal{J}}_{g,n}$ with fibers  at closed points given by
$\left(\mathcal{W}_k\right)_{(C,P_{\bullet})} \cong \mathcal{F}_k(W^\bullet)$. If the sheaf of coinvariants descends to $\bMgn[n]$, then $\phi^\mathcal{J}$ descends to a map
\begin{equation}\label{eq:CSheaf} 
\phi \, \colon \, \mathcal{W}^{\bullet}_k = \left(\mathcal{F}_k(W^\bullet) \otimes \pi_* \ \Oo_{\tMgn[n]}  \right)^{\Aut \Op^n} \longrightarrow \  \left(\pi_* \ \widetriangle{\VV}_g(V;W^\bullet) \right)^{\Aut \Op^n} = {\VV}_g(V;W^\bullet).
\end{equation}
of sheaves over $\bMgn[n]$. In case $k=0$,  the sheaf $\mathcal{W}^{\bullet}_0$ (resp. $(\mathcal{W}^{\bullet}_0)^\mathcal{J}$) on $\overline{\mathcal{M}}_{g,n}$ (resp. on $\mathcal{J}$) is constant, with fibers $\left(\mathcal{W}_0\right)_{(C,P_{\bullet})}=\bigotimes_j W^j_0$.
\end{lemma}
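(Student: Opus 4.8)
The plan is to construct the map upstairs on $\tMgn[n]$ and then descend it along the two torsors recalled in \S\ref{LieDefs}, following the two-step procedure of \S\ref{desc2}. By \eqref{TCoinvariants} the coinvariants sheaf on $\tMgn[n]$ is a quotient of the constant sheaf $W^\bullet\otimes\Oo_{\tMgn[n]}$, so there is a canonical surjection $q\colon W^\bullet\otimes\Oo_{\tMgn[n]}\twoheadrightarrow\widetriangle{\VV}_g(V;W^\bullet)$. Since $\mathcal{F}_k(W^\bullet)=\bigoplus_{0\le d\le k}W^\bullet_d$ is a finite-dimensional vector subspace of $W^\bullet$, the first step is simply to restrict $q$ to the constant subsheaf $\mathcal{F}_k(W^\bullet)\otimes\Oo_{\tMgn[n]}$. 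Taking $\pi_*$ and then $(\Aut_+\Op)^n$-invariants produces $\phi^\mathcal{J}$; when the coinvariants descend to $\bMgn[n]$, a further passage to $\Gm^n$-invariants (via Tsuchimoto's method recalled in \S\ref{desc2}) produces $\phi$. Naturality of both maps is inherited from that of $q$.

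For this to make sense I must check that the restriction of $q$ is equivariant for the relevant group actions; by the compatibilities of \S\ref{desc} it suffices to know that $\mathcal{F}_k(W^\bullet)$ is stable under $(\Aut_+\Op)^n$. Here I would use that $\Aut_+\Op$ acts on each $V$-module through exponentials of the operators $L_p=\omega_{(p+1)}$ with $p\ge1$. By the graded-action axiom of \S\ref{sec:VMOD}, $\omega\in V_2$ gives $L_pW_\ell\subseteq W_{\ell+2-(p+1)-1}=W_{\ell-p}$, so each $L_p$ with $p\ge1$ strictly lowers degree; hence any exponential of a sum of such operators preserves the degree-$\le k$ subspace $\mathcal{F}_k(W^\bullet)$. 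This exhibits $\mathcal{F}_k(W^\bullet)$ as a finite-dimensional $(\Aut_+\Op)^n$-representation and identifies $(\mathcal{W}^\bullet_k)^\mathcal{J}$ with the bundle associated to the $(\Aut_+\Op)^n$-torsor $\tMgn[n]\to\overline{\mathcal{J}}_{g,n}$ through it; in particular its fibers are isomorphic to $\mathcal{F}_k(W^\bullet)$, as asserted.

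The case $k=0$, where $\mathcal{F}_0(W^\bullet)=W^\bullet_0=\bigotimes_jW^j_0$, is then immediate from the same degree count: for $p\ge1$ one has $L_pW_0\subseteq W_{-p}=0$, so the positive Virasoro operators annihilate $W_0$ and their exponentials act as the identity, making $(\Aut_+\Op)^n$ act trivially on $W^\bullet_0$. The bundle associated to a trivial representation is the trivial bundle, so $(\mathcal{W}^\bullet_0)^\mathcal{J}$ is constant on $\overline{\mathcal{J}}_{g,n}$ with fiber $\bigotimes_jW^j_0$. To descend further, note that the $\Gm^n$-weight formula of \S\ref{desc2} scales a homogeneous $w^i$ by $z_i^{-\deg(w^i)}$, which is trivial on $W^i_0$ since there $\deg=0$; thus $\Gm^n$ also acts trivially on $W^\bullet_0$, and $\mathcal{W}^\bullet_0$ is constant on $\bMgn[n]$ with the same fiber.

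I expect the only genuine obstacle to be the equivariance verification: one must pin down that the $(\Aut\,\Op)^n$-action defining the descent is indeed the Virasoro exponential action, and that restricting $q$ to $\mathcal{F}_k(W^\bullet)\otimes\Oo_{\tMgn[n]}$ commutes simultaneously with this action and with the $\mathcal{L}_{C\setminus P_\bullet}(V)$-action used to form the quotient, both compatibilities being supplied by \S\ref{desc}. Once the degree-lowering property of $L_p$ for $p\ge1$ is established, the stability of $\mathcal{F}_k(W^\bullet)$ and the triviality of the action on $W^\bullet_0$ follow at once, and the remainder of the descent is formal.
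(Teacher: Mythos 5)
Your proposal is correct and follows essentially the same route as the paper: restrict the canonical quotient map to the constant subsheaf $\mathcal{F}_k(W^\bullet)\otimes\Oo_{\tMgn[n]}$, verify $(\Aut_+\Op)^n$-stability via the degree-lowering property of $L_p$ for $p\ge 1$, and observe that both $\Aut_+\Op$ and $\Gm$ act trivially on $W^\bullet_0$ in the $k=0$ case. The only (trivial) point the paper makes explicit that you leave implicit is that $\Gm^n$ preserves $\mathcal{F}_k(W^\bullet)$ for general $k$, since it acts by scaling homogeneous elements and hence preserves degree.
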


\begin{remark}\label{rmk:Descent} If the conformal dimensions of the modules are rational, then the sheaf of coinvariants ${\VV}^\mathcal{J}(V;W^\bullet)$ on $\mathcal{J}=\overline{\mathcal{J}}_{g,n}$ descends to the sheaf ${\VV}_g(V;W^\bullet)$ on $\bMgn[n]$ (see \cite[\S 8]{dgt2}).
\end{remark}

\proof
Consider the constant bundle $\mathcal{F}_k(\bigotimes_jW^{j}) \otimes \Oo_{\tMgn[n]}$  on $\tMgn[n]$, where $\mathcal{F}_k(\bigotimes_jW^{j})$ is the degree $k$ part of the filtration defined in \S \ref{Filtration}. Then $\mathcal{F}_k(\bigotimes_jW^{j}) \otimes \Oo_{\tMgn[n]}$ is a sub-bundle of $W^\bullet \otimes \Oo_{\tMgn[n]}$ and there is a natural composition 
  \begin{equation}\label{eq:TMap} \xymatrix{
  \mathcal{F}_k(\bigotimes_jW^{j}) \otimes \Oo_{\tMgn[n]} \ar@{^(->}[r] & W^\bullet \otimes \Oo_{\tMgn[n]}
  \ar[r] &  \dfrac{W^\bullet \otimes \Oo_{\tMgn[n]}}{\Ll_{\widetriangle{C}_{g,n} \setminus P_\bullet}(W^\bullet \otimes \Oo_{\tMgn[n]})}=\widetriangle{\VV}_g(V;W^\bullet).}
 \end{equation}
To show that this induces the maps \eqref{eq:CSheafJ} and \eqref{eq:CSheaf} it is enough to show that $\mathcal{F}_k(\bigotimes_jW^{j})$ is an ${\Aut}\,\mathcal{O}^n$-equivariant subset of $W^\bullet$. For this purpose, recall that ${\Aut}\,\mathcal{O}= \Gm \ltimes {\Aut}_+\,\mathcal{O}$. The action of ${\Aut}_+\mathcal{O}^n$ is given exponentiating the action of $L_i$ for $i \geq 1$, which shifts the degree in the negative direction, hence preserving $\mathcal{F}_k(W^\bullet)$. By definition, an element $z \in \Gm$ sends a homogeneous element $w \in W^j$ to $z^{-\deg \, w} w$, hence it preserves the degree of the element. It follows that $\mathcal{W}^{\bullet}_k:=\left(\mathcal{F}_k(W^\bullet) \otimes \pi_* \Oo_{\tMgN}  \right)^{\Aut \Op^n}$ is well defined, and the morphisms \eqref{eq:CSheafJ} and \eqref{eq:CSheaf} is induced from \eqref{eq:TMap}.

For the last claim, it is enough to show that every element of ${\Aut}\,\mathcal{O}^n$ acts on $W_0^{\bullet}=\bigotimes_j W_0^j$ as the identity. From what we have just observed, every element of ${\Aut}^+\mathcal{O}$ acts on $W^j_0$ as the identity because the modules are positively graded. From the description of the action of $\Gm$ on $W^j$ given above, we have that the action of $z_\bullet=(z_1, \dots, z_n) \in \Gm^n$  on the element $w^{\bullet}=w_0^1\otimes \cdots \otimes w_0^n \in W_0^{\bullet}$, is given by
\[z_\bullet\cdot w^\bullet=z_1^{\deg(w^1_0)}w^1_0 \otimes z_2^{\deg(w^2_0)} w^2_0 \otimes \cdots \otimes z_n^{\deg(w^n_0)} w^n_0 =w^\bullet,\] so the restriction of this action to $W_0^{\bullet}$ is  the identity as wanted. It follows that the action of $(\Aut\,\Op)^n$ on $W_0^\bullet \otimes \Oo_{\tMgN}$ is only given by the action of $(\Aut\,\Op)^n$ on $\Oo_{\tMgN}$ and hence 
\[ \pi_*\left( W_0^\bullet \otimes \Oo_{\tMgN} \right)^{(\Aut\,\Op)^n} = W_0^\bullet \otimes \left(\pi_* \Oo_{\tMgN}\right)^{(\Aut\,\Op)^n} = W_0^\bullet \otimes \Oo_{\bMgN},
\]which concludes the proof.
\endproof

\begin{example}\label{eg:UnitaryW} When $\mathfrak{g}$ is simply laced, the minimal series 
principal $\mathcal{W}$-algebras $\mathcal{W}_{\ell}(\mathfrak{g})$ are simple, rational, $C_2$-cofinite, and of CFT-type
for any (nondegenerate) admissible level $\ell$ \cite{ArakawaRatW, ArakawaC2W}. In case $\ell+h^{\vee} = \frac{k+h^{\vee}}{k+h^{\vee}+1}$, for any positive integer $k$, these algebras are unitary.  $\mathcal{W}_{\ell}(\mathfrak{g})$ is the simple quotient of the universal $W$-algebra $\mathcal{W}^{\ell}(\mathfrak{g})$. Zhu's algebra  $A(\mathcal{W}^{\ell}(\mathfrak{g}))$ is isomorphic to the center $Z(U(\mathfrak{g}))$  of the universal enveloping algebra of $\mathfrak{g}$, and  $A(\mathcal{W}_{\ell}(\mathfrak{g}))$ is a quotient of $Z(U(\mathfrak{g}))$ (see eg \cite{DeSole.Kac:2006:Finite}).  In particular, these algebras are commutative. Since the irreducible representations of a commutative algebra are one dimensional,  any  constant sheaf $\mathcal{W}^{\bullet}_0$ made from  simple modules over $\mathcal{W}^{\ell}(\mathfrak{g})$ or $\mathcal{W}_{\ell}(\mathfrak{g})$ on $\overline{\mathcal{M}}_{0,n}$  is a line bundle.  

As mentioned in the introduction, by \cite[Main Theorem 2]{ACL} in types $A$, $D$ and $E$, and in \cite[Corollary 4.1 and Theorem 7.1]{creutzig2021trialities}) in types $B$ and $C$, both $\mathcal{W}^{\ell}(\mathfrak{g})$ and $\mathcal{W}_{\ell}(\mathfrak{g})$ can be realized as cosets of tensor products of affine vertex operator algebras.
The result in type $A$ with $k=1$ was also proved in \cite{ArakawaLamYamada:2019:parafermion}. In particular, except possibly when $k+h^{\vee} \in \mathbb{Q}_{\leq0}$,
\[\mathcal{W}^{\ell}(\mathfrak{g})\cong Com(V_{k+1}({\mathfrak{g}}), V_k(\mathfrak{g})\otimes L_1(\mathfrak{g})), \ \mbox{ and } \
\mathcal{W}_{\ell}(\mathfrak{g})\cong Com(L_{k+1}({\mathfrak{g}}), L_k(\mathfrak{g})\otimes L_1(\mathfrak{g})).\]
\end{example}

\begin{remark}\label{rmk:BinLattice} Although by \cite{dgt3}, bundles of coinvariants for even lattice VOAs have rank one on $\overline{\mathcal{M}}_{0,n}$, it is not true that Zhu's algebra $A(V_{L})$ will be  commutative, even for an even lattice of rank $1$. \end{remark}

\section{Preparations for the proof of Theorem \ref{thm:GG}}\label{sec:MainPrep}
Here we develop some tools that will be useful for proving Theorem \ref{thm:GG}. Although the theorem only discusses vertex algebras which are strongly generated in degree 1, we first analyze properties of vertex algebras strongly generated in degree $d$ for $d \geq 1$, and then restrict to $d=1$.

\begin{lemma}\label{lem:gendegd} Assume that $V$ is strongly finitely generated in degree $d$ and $W$ is an admissible $V$ module. Then every element $w \in W$ can be written as a linear combination of elements of the form
\begin{equation} \label{eq:gendegd} A^1_{-j_1} A^2_{-j_2} \cdots A^m_{-j_m}w_0,
\end{equation} for some $m \geq 0$, with $A^i \in \mathcal{F}_{d}(V) \setminus V_0$, and such that 
\begin{enumerate}
\item if $\deg(w)=0$, then  $\deg(A^m_{-j_m})\geq 0$, and 
\item if $\deg(w)>0$, then  $\deg A^m_{-j_m} \geq 1$.
\end{enumerate}
\end{lemma}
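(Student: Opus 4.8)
My plan is to reduce the statement to two steps: showing that $W$ is spanned by words in the modes of a fixed set of strong generators applied to the lowest-weight space $W_0$, and then bookkeeping on the rightmost such mode to extract the two degree conditions. For the first step I would begin with Zhu's generation result \cite[Theorem 2.2.2]{ZhuModular}, which gives that $W$ is generated by $W_0$ over $V$; equivalently, $W$ is spanned by elements $B^1_{(k_1)}\cdots B^s_{(k_s)}w_0$ with $B^i\in V$, $k_i\in\mathbb{Z}$, and $w_0\in W_0$. Since $V$ is of CFT-type and strongly generated in degree $d$, every $B\in V$ is a linear combination of normal-ordered monomials $A^{i_1}_{(-n_1)}\cdots A^{i_t}_{(-n_t)}\mathbf{1}$ in generators $A^{i_j}\in\mathcal{F}_d(V)\setminus V_0$, the modes of $\mathbf{1}$ being trivial by the vacuum axiom. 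The essential manipulation is to rewrite each mode $B_{(k)}$ of such a monomial as a linear combination of products of generator modes: peeling off the leftmost generator, one applies the module associator formula \ref{it:assocVmod} to $\bigl(A^{i_1}_{(-n_1)}C\bigr)_{(k)}$ with $C=A^{i_2}_{(-n_2)}\cdots\mathbf{1}$, which expresses $B_{(k)}$ as a sum of two-fold products of an $A^{i_1}$-mode and a $C$-mode; induction on the monomial length then reduces everything to products of generator modes. The nominally infinite sums in the associator formula cause no difficulty, because on any fixed homogeneous vector the graded action forces all but finitely many terms to act with negative degree and hence to vanish. This yields that $W$ is spanned by words $A^1_{-j_1}\cdots A^m_{-j_m}w_0$ with each $A^i\in\mathcal{F}_d(V)\setminus V_0$ and $w_0\in W_0$, the mode indices being a priori unconstrained.

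For the second step I would fix a homogeneous $w$ and, since each mode is homogeneous, assume each word above is homogeneous of degree $\deg w$. I then process each word from the right. If the rightmost mode $A^m_{-j_m}$ has negative degree, it sends $w_0\in W_0$ into a subspace of negative degree, which is zero because $W$ is non-negatively graded; such terms are discarded, so every surviving word satisfies $\deg(A^m_{-j_m})\geq 0$, which is exactly conclusion (1). When $\deg w>0$, I additionally absorb any rightmost mode of degree $0$ into the lowest-weight vector: such a mode preserves $W_0$, so $A^m_0 w_0\in W_0$ and the word is shortened by replacing $w_0$ with $A^m_0 w_0$ and deleting the last mode. This absorption terminates, and since the total degree of the word equals $\deg w>0$ it cannot collapse to the empty word; the process therefore halts at a rightmost mode of degree $\geq 1$, giving conclusion (2).

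I expect the main obstacle to lie in the first step, namely making the passage from arbitrary modes to generator modes fully rigorous: one must verify that the associator identity, though formally an infinite sum, truncates to a finite expression on each vector, and organize the induction on monomial length so that the class of finite linear combinations of products of generator modes is manifestly stable under the operations it produces. By contrast the second step is elementary, but it is where the asymmetry between (1) and (2) actually arises, so I would take care that the absorption step removes only degree-zero modes and leaves the $\deg w = 0$ case untouched.
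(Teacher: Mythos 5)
Your proposal is correct and follows essentially the same route as the paper's proof: invoke Zhu's generation of $W$ from $W_0$, expand each $B^i$ via strong generation in degree $d$, peel off generator modes with repeated left-to-right applications of the associator formula, and then discard or absorb the rightmost mode according to its degree. Your added care about the truncation of the associator sums and the termination of the absorption step for conclusion (2) only makes explicit what the paper leaves implicit.
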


\proof We begin by observing that we can exclude the case that $A^i \in V_0 = \mathbb{C}\mathbf{1}$ since $\mathbf{1}_{-j}$ either acts on $W$ by zero or by the identity.

By the proof of \cite[Theorem 2.1.2]{ZhuModular}, every element in $w\in W$ can be written as a combination of  ${B^1_{-j_1} B^2_{-j_2} \cdots B^\ell_{-j_\ell}u_0}$ for some $B^i$ in $V$ and $u_0 \in W_0$. Since $V$ is strongly generated in degree $d$, we know that every element $B^i$ of $V$ can be written as a combination of elements of the type $B^{i,1}_{-k^i_1} \cdots B^{i,n_i}_{-k^i_{n_i}} \mathbf{1}$ where ${\rm{deg}}(B^{i,s}) \leq d$ for all $s \in \{1,\dots, n_i\}$.

Using this notation, we are then left to prove that every element written as

\[ w=\left(B^{1,1}_{-k^1_1} \cdots B^{1,n_1}_{-k^1_{n_1}} \mathbf{1} \right)_{-j_1}\cdot \left(B^{2,1}_{-k^2_1} \cdots B^{2,n_2}_{-k^2_{n_2}} \mathbf{1} \right)_{-j_2} \cdots
 \left(B^{\ell,1}_{-k^{\ell}_1} \cdots B^{\ell,n_{\ell}}_{-k^{\ell}_{n_{\ell}}} \mathbf{1} \right)_{-j_{\ell}} \cdot u_0 
\] can be rewritten as a linear combination of elements as in equation \eqref{eq:gendegd}. This result is true by repeated use of the associator formula  applied from left to right. That is, we first rewrite
\[w_1:=\left(B^{2,1}_{-k^2_1} \cdots B^{2,n_2}_{-k^2_{n_2}} \mathbf{1} \right)_{-j_2} \cdots
 \left(B^{\ell,1}_{-k^{\ell}_1} \cdots B^{\ell,n_{\ell}}_{-k^{\ell}_{n_{\ell}}} \mathbf{1} \right)_{-j_{\ell}} \cdot u_0 
\] so that 
\[w = \left(B^{1,1}_{-k^1_1}\cdot B^{1,2}_{-k^1_2} \cdots B^{1,n_1}_{-k^1_{n_1}} \mathbf{1} \right)_{-j_1} \hspace{-2mm} w_1\\
= \left( B^{1,1}_{-k^1_1}\underset{=: \, D^{1,2}}{\left(B^{1,2}_{-k^1_2} \cdots B^{1,n_1}_{-k^1_{n_1}} \mathbf{1}\right)} \right)_{-j_1}\hspace{-4mm}w_1= \left(B^{1,1}_{-k^1_1}(D^{1,2}) \right)_{-j_1} \hspace{-2mm}w_1.\]
We can then expand $\left(B^{1,1}_{-k^1_1}(D^{1,2}) \right)_{-j_1}$ using the associator formula.  Following the expansion, we rewrite $D^{1,2}$ as $B^{1,2}_{-k^1_2}(D^{1,3})$, where $D^{1,3}=\left(B^{1,3}_{-k^1_3} \cdots B^{1,n_1}_{-k^1_{n_1}}\right)$, and again expand using the associator formula. Repeating this for  all $D^{1,i}=\left(B^{1,i}_{-k^1_i} \cdots B^{1,n_1}_{-k^1_{n_1}}\right)$, and expanding using the associator formula, and then carrying out the same procedure for the factors of $w_1$, we arrive at a linear combination of terms of the form described in \eqref{eq:gendegd}.

We note that once in the form given in \eqref{eq:gendegd}, if the term $A^m_{-j_m}$ adjacent to $w_0$ has negative degree then, $A^m_{-j_m}w_0=0$, since $W$ is graded by $\mathbb{N}$.  If $A^m_{-j_m}$ has degree zero then $A^m_{-j_m}w_0=u_0 \in W_0$, and we may as well replace it.
\endproof

Given Lemma \ref{lem:gendegd}, we make the following definition for the length of an element in $W$, which will be used to argue by induction in the proof of Lemma \ref{lem:gendegrefined}.

\begin{definition} Suppose $V$ is strongly generated in degree $d$ and that $W$ is a simple $V$-module.  For $\ell \in  \NN$,  set
 \[G^{\ell}(W):={\rm{Span}}\{A^{1}_{-j_1}\cdots A^{\ell}_{-j_{\ell}}w_0 \, \text{ such that } \, A^{j} \in \mathcal{F}_{d}(V)\setminus V_0, \ w_0 \in W_0\}.\] We say that $w$ has \textit{$d$-length} $\ell$ if $w \in L^\ell(W):= G^{\ell}W \setminus G^{\ell -1}(W)$. 
\end{definition}

When there is no ambiguity on $d$, we will simply use length in place of $d$-length. Given two elements $w_1$ and $w_2 \in W$, we say that $w_1$ is shorter than $w_2$ if the length of $w_1$ is smaller than the length of $w_2$. For the proof of Theorem \ref{thm:GG} we will need a refined version of Lemma \ref{lem:gendegd}.

\begin{lemma} \label{lem:gendegrefined} Assume that $V$ is strongly finitely generated in degree $d$. Then every element $w \in W$ such that ${\rm{deg}}(w)>0$ can be written as a combination of elements of the form as in \eqref{eq:gendegd} with the additional properties:
\begin{enumerate}
\item[(i)] If $\deg(A^1)=1$, then $j_1 \geq 1$.
\item[(ii)] In case $d=1$, every element $A^i_{-j_i}$ has positive degree, or equivalently $j_i \geq 1$ for every $i$.
\end{enumerate}  
\end{lemma}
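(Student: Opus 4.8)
The plan is to start from the representation of $w$ guaranteed by Lemma~\ref{lem:gendegd}, namely a linear combination of terms $A^1_{-j_1}\cdots A^m_{-j_m}w_0$ with each $A^i \in \mathcal{F}_d(V)\setminus V_0$, and then push the degree constraints further inward using the associator and commutator formulas. Since $\deg(w)>0$, Lemma~\ref{lem:gendegd}(b) already gives $\deg(A^m_{-j_m})\geq 1$ for the innermost operator, so the refinement concerns the behavior of the \emph{leftmost} operator (for (i)) and, when $d=1$, of \emph{all} operators (for (ii)).

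For part (i), I would argue by contradiction on the shape of the leftmost factor: suppose $\deg(A^1)=1$ but $j_1\leq 0$. Since $A^1\in V_1$, the graded action gives $\deg(A^1_{-j_1})=\deg(A^1)-(-j_1)-1=j_1$, so $j_1\leq 0$ means the leftmost operator has nonpositive degree. The idea is that such a low-degree operator applied on the far left can be eliminated or commuted past: using the commutator formula \ref{it:commVmod}, I would move $A^1_{-j_1}$ to the right past the remaining operators, incurring correction terms of strictly smaller $d$-length (each bracket $[A^1_{(i)}, B_{(j)}]$ produces a single operator $(A^1_{(k)}(B))_{(\cdots)}$ in place of two), and eventually hitting $w_0$. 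Because $\deg(w)>0$ and the total degree is preserved, the boundary terms in which $A^1_{-j_1}$ acts directly on $w_0$ either vanish (if the resulting degree is negative) or land in $W_0$ (if degree zero), contradicting $\deg(w)>0$ unless compensated by a genuinely positive-degree leftmost operator. This gives an induction on $d$-length, the base case being length one where the claim is immediate from $\deg(w)>0$.

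For part (ii), with $d=1$ every generator $A^i$ lies in $V_1$, so $\deg(A^i_{-j_i})=j_i$, and the claim $j_i\geq 1$ is exactly the statement that every operator has strictly positive degree. Here I would again induct on $d$-length, this time systematically commuting any nonpositive-degree operator $A^i_{-j_i}$ (i.e. with $j_i\leq 0$) rightward toward $w_0$ via \ref{it:commVmod}; when $d=1$ the bracket $[A^i_{(j)},B_{(k)}]=\sum_{s\geq0}\binom{j}{s}(A^i_{(s)}(B))_{(j+k-s)}$ produces terms of strictly smaller length, and the fully-commuted term where $A^i_{-j_i}$ reaches $w_0$ vanishes or drops into $W_0$ as before. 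The induction closes because every correction term is shorter, and the finitely many shortest terms must themselves have all operators of positive degree.

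The main obstacle I anticipate is bookkeeping the interaction between the degree condition and the length filtration: when commuting a low-degree operator past its neighbors, one must verify that \emph{every} term produced by the associator/commutator expansion either has strictly smaller $d$-length or already satisfies the required positivity, so that the induction hypothesis applies cleanly. In particular one must ensure that the correction terms, while shorter, still have strictly positive total degree (so that Lemma~\ref{lem:gendegd}(b) and the inductive hypothesis remain available to them) and that no term of equal length with a bad leftmost operator reappears. Keeping the two constraints---total degree positive, and degree of the relevant operators positive---simultaneously controlled through the repeated application of \ref{it:commVmod} is the delicate part; the $d=1$ restriction in (ii) is what makes $\deg(A^i_{-j_i})=j_i$ and thereby converts the abstract degree condition into the concrete index bound $j_i\geq 1$.
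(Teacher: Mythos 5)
Your argument for part (ii) is essentially the paper's: a single application of the commutator formula to swap a nonpositive-degree mode past its neighbor, observing that for $d=1$ only $k=0,1$ survive in $\sum_k\binom{i}{k}(A_{(k)}B)_{(i+j-k)}$ (degree $1$ and degree $0=$ vacuum multiple respectively), plus induction on length; the paper merely packages this as a reduction to $m=2$ rather than commuting the bad operator all the way to $w_0$. For part (i), however, you take a genuinely different route. The paper never touches the offending operator directly: it introduces the $K$-value (the position of the leftmost generator of $V$-degree $\geq 2$) and bubbles that generator leftward past the degree-one generators, each swap either lowering $K$ or shortening the term, until either $K=1$ (so $\deg(A^1)\geq 2$ and (i) is vacuous) or $K=0$ (all generators in $V_1$, and the argument of (ii) applies). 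You instead commute the bad leftmost operator $A^1_{-j_1}$ with $j_1\leq 0$ rightward until it hits $w_0$, where it dies ($j_1<0$) or is absorbed into $W_0$ ($j_1=0$); the correction terms $(A^1_{(k)}A^s)_{(\cdot)}$ have $\deg(A^1_{(k)}A^s)=\deg(A^s)-k\leq d$, so they stay in $\mathcal{F}_d(V)$ and are strictly shorter. This is correct and arguably more direct, since it handles arbitrary $d$ without the $K$-value bookkeeping; the paper's version buys a cleaner induction (each swap is a single local move that monotonically decreases a well-ordered pair) and reuses (ii) as a black box. Two presentational points: your ``by contradiction'' framing is misleading --- since the decomposition of $w$ into terms of the form \eqref{eq:gendegd} is not unique, a single bad term yields no contradiction; what you actually carry out is a rewriting of that term as a combination of shorter ones, and you should say so. Also note that the length does not drop at each individual commutation step (the reordered leading term still has length $m$); it drops only once $A^1_{-j_1}$ is fully absorbed at $w_0$, so the induction should be organized on length after the complete pass, exactly as you do implicitly.
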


\proof We start proving part \textit{(ii)} and observe that by induction on the length of the elements it is enough to consider only the case in which $m=2$. Using the commutator formula, we have that
\[
A^1_{-j_1} A^2_{-j_2} w_0 = A^2_{-j_2}A^1_{-j_1} w_0 + \sum_{k \geq 0} \left( A^1_kA^2\right)_{-j_1-j_2-k}w_0.
\] We now show that each non zero term on the right hand side of the equality is a term which is either of degree zero or it satisfies the wanted property. The first term of the right hand side is either zero if $\deg(A^1_{-j_1})$ is negative, or $A^2_{-j_2} u_0$ for some $u_0 \in W^0$  if $\deg(A^1_{-j_1})=0$, or the degree of $A^1_{-j_1}$ is positive.  We now look at the other terms. Since both $A^1$ and $A^2$ have degree $1$, we have that the only non zero summands are those where $k=0$ or $k=1$. When $k=0$ we have that $B=A^1_0A^2$ is an element of degree one, hence we reduce the statement to the case $m=1$. When $k=1$ the element $A^1_1A^2$ has degree zero, which implies that is a multiple of the vacuum vector and so it can act on $w_0$ only by a scalar. 

We are left to prove \textit{(i)}. Also in this case the proof follows from the commutator formula and induction on the length of elements. With more details, let $w = A^1_{-j_1} \cdots A^m_{-j_m} u_0$ as in Lemma \ref{lem:gendegd} and define the $K$-value of $w$, denoted $K(w)$, as the smallest integer in $\{0,1, \dots, m\}$ such that $\deg(A^{K(w)}) \geq 2$, with $K(w)=0$ if all the elements have degree $1$. It is enough to show that every element with $K(w) \geq 2$ can be written as a sum of shorter elements and elements with smaller $K$-value. By repeating the argument we reduce to the case in which either $K=0$ or $K=1$.  Observe that if $K=0$, then we are done from \textit{(ii)} above, while if $K=1$, we are in the situation $\deg(A^1)\geq 2$. When $K(w)=K \geq 2$, then using the commutator formula we can write $w$ as
\begin{multline*}A^1_{-j_1} \cdots  A^K_{-j_K}  \cdot A^{K-1}_{-j_{K-1}} \cdot A^{K+1}_{-j_{K+1}} \cdots A^m_{-j_m} u_0 + \\
+ \sum_{k\geq 0} A^1_{-j_1} \cdots A^{K-2}_{-j_{K-2}} \cdot  (A^{K-1}_{k}(A^K))_{-j_{K}-j_{K-1}-k} \cdot A^{K+1}_{-j_{K+1}} \cdots A^m_{-j_m} u_0. 
\end{multline*}
The first term is an element with $K$-value less than $K$. Moreover, since $\deg(A^{K-1})=1$, then for every $k \geq 0$ we have that $\deg A^{K-1}_{k}(A^K) \geq d$, which shows that the terms in the second line are shorter than $w$. \endproof

\section{Proof of Theorem \ref{thm:GG}}\label{ProofThm}
Here we prove Theorem \ref{thm:GG}.  We recall that in \S \ref{desc} we describe the descent of coinvariants from  $\tMgn[n]$ to $\bMgn[n]$,  which is carried out in two steps, first to a vector bundle $\mathbb{V}^{\mathcal{J}}(V; W^\bullet)$ on $\overline{\mathcal{J}}_{g,n} $, and then, if possible, to $\bMgn[n]$.  To show that the sheaf of coinvariants is globally generated, we show that the maps $\phi^{\mathcal{J}}$ and $\phi$ (when it exists) defined in Lemma \ref{CLemma} from the constant bundle $\mathcal{W}^{\bullet}_0$ to  $\mathbb{V}_0(V; W^{\bullet})$ is surjective. For this, it is sufficient to show that the map under consideration is surjective on fibers. 

Since fibers of either sheaf are isomorphic, and the arguments are the same, without loss of generality we make the argument for the map $\phi$ and assume $(C, P_{\bullet}) \in \overline{\mathcal{M}}_{0,n}$. Then the restriction  $\phi|_{(C, P_{\bullet})}$ of $\phi$ to the fiber  is induced by the composition
\[W^{\bullet}_0 \hookrightarrow W^{\bullet} \twoheadrightarrow 
W^{\bullet}/ \mathcal{L}_{C\setminus P_ \bullet}(V) W^{\bullet} \cong  W^\bullet_{\mathcal{L}_{C\setminus P_ \bullet}(V)}.\] 

It is enough to show that every element of $W^\bullet$ can be written as a combination of elements of $W_0^\bullet$ and elements in $\mathcal{L}_{C \setminus P_\bullet}(V) \cdot W^\bullet$. 

For this purpose it suffices to show that for every $d \geq 1$, we can write any element $w^\bullet$ of $W^\bullet_d = \mathcal{F}_{d}(W^\bullet) \setminus \mathcal{F}_{d-1}(W^\bullet)$ as a linear combinations of element in $\mathcal{L}_{C \setminus P_\bullet}(V) \cdot W^\bullet \cup \mathcal{F}_{d-1}(W^\bullet)$. In other words we will prove that there exist elements: 
\[ \sigma \in \mathcal{L}_{C\setminus P_ \bullet}(V) \qquad \text{ and } \qquad v^{\bullet} \in W^{\bullet},\] so that
\begin{equation}   \label{eq:DegreeId} \sigma(v^{\bullet})-w^{\bullet} \in  \mathcal{F}_{\deg(w^\bullet)-1}(W^{\bullet}).
\end{equation}

In view of part (ii) of Lemma \ref{lem:gendegrefined}, we can further assume that $w^\bullet$ is of the form $w^1 \otimes \cdots \otimes w^n$ where: 
\begin{itemize}
\item every $w^k$ is a homogeneous element of $W^k$ of degree $d_k$;
\item for one $i \in \{1, \dots,n \}$ we can write $w^i=A_{-j}u^i$, with $u^i$ a homogeneous elements of $W^i$, $A \in V_1$ and $j \geq 1$.
\end{itemize}

We start by showing that there exists an element $\sigma = A \otimes \mu \in \mathcal{L}_{C\setminus P_ \bullet}(V)$, with a pole of order $j$ at $P_i$, and regular at all the other points. Since the description of $\mathcal{L}_{C\setminus P_ \bullet}(V)$ over nodal curves requires some extra work, the proof continues treating separately the cases of $C$ being a smooth or a nodal curve.

\subsection{The smooth case: \texorpdfstring{$C \cong \PP^1$}{}}\label{sec:CaseOne}
Since $C\setminus P_{\bullet}$ is affine, we can deduce that
\[\mathcal{L}_{C\setminus P_{\bullet}}(V)\cong \text{H}^0(C\setminus P_{\bullet}, \mathcal{V}_C\otimes \omega_C/\text{Im} \nabla)\cong 
\text{H}^0(C\setminus P_{\bullet}, \mathcal{V}_C\otimes \omega_C)/\nabla \text{H}^0(C\setminus P_{\bullet}, \mathcal{V}_C).\]
By \cite{dgt2},  one has
\[\text{H}^0(C\setminus P_{\bullet}, \mathcal{V}_C\otimes \omega_C)\cong \bigoplus_{m \ge 0}\text{H}^0(C\setminus P_{\bullet}, (\omega_C^{1-m})^{\dim V_m}) \cong \bigoplus_{m\ge 0}V_m \otimes \text{H}^0(C\setminus P_{\bullet}, \omega_C^{1-m}).\]
Using Riemann-Roch, if $D=\mathscr{O}_C(jP_i)$, the subset
\[\bigoplus_{m\ge 0}V_m \otimes \text{H}^0(C, \omega_C^{1-m}(D))\subseteq \bigoplus_{m\ge 0}V_m \otimes \text{H}^0(C\setminus P_{\bullet},  \omega_C^{\otimes 1-m}),\]
is non zero. In particular, it contains an element $\sigma=A\otimes \mu$ where $\mu \in \text{H}^0(C, \Oo(jP_i))$ has a pole of order $j$ at $P_i$, and regular elsewhere.

We next show that the element $v^\bullet= w^1 \otimes \cdots \otimes w^{i-1} \otimes u^i \otimes w^{i+1} \otimes \cdots w^n$, satisfies \eqref{eq:DegreeId}. The action of $\mathcal{L}_{C\setminus P_\bullet}(V)$ on $W^\bullet$ is given by the diagonal action of $\sigma_{P_k}$ on $W^k$ (this is the image of $\sigma$  along the map $\mathcal{L}_{C\setminus P_\bullet} \rightarrow \mathfrak{L}_{P_k}(V)$ arising from \eqref{chiraltoLQp}). From the definition of $\sigma$ we have that 
\begin{equation*} \sigma_{P_i}=A_{[-j]}+\sum_{m \ge 1}\alpha_{-j+m}A_{[-j+m]},
\end{equation*}
and that since $\sigma$ is regular at the other points we have that $\sigma_{P_k}=\sum_{q \geq 0}b_qA_{[q]}$ with $b_q \in \mathbb{C}$, hence
\begin{equation}\label{eq:boundsigmaP}\sigma_{P_k} \cdot W^j_{d_k} \subset W^j_{d_k}.
\end{equation}

Summarizing, we conclude that
\begin{align}\label{Halloween} \sigma(v^{\bullet})-w^{\bullet}
&=\sum_{m \ge 1}\alpha_{-j+m}  \left( w^1\otimes \cdots \otimes w^{i-1}\otimes A_{(-j+m)}u^i \otimes w^{i+1} \otimes \cdots \otimes w^n\right)\\
 & \qquad + \sum_{k \ne i}w^1\otimes \cdots \otimes u^i \otimes  
 \cdots \otimes w^{k-1} \otimes \sigma_{P_k}(w^k)  \otimes w^{k+1} \otimes \cdots w^n. \nonumber
\end{align}

Terms in the first line of the right hand side of \eqref{Halloween} are in $\mathcal{F}_{\deg(w^\bullet)-m}(W^{\bullet})$ for $m \ge 1$. Each summand in the second line of \eqref{Halloween} is in $\mathcal{F}_{\deg(w^\bullet)-j}(W^{\bullet})$ by \eqref{eq:boundsigmaP}. Since $j \geq 1$ by assumption, then we can conclude that \eqref{eq:DegreeId} holds, concluding the proof in the smooth case.

\subsection{\texorpdfstring{$C$}{C} is nodal.}\label{sec:Thm1nodalCase} From the stability condition we can ensure that $C\setminus P_\bullet$ is an affine curve. Moreover, without loss of generality we can assume that $C$ has two components, $C_+$ and $C_-$ which meet at only one node $Q$ as in Figure \ref{fig:Cnodal}. The marked points on $C_+$ will be indexed by $P_{\bullet,+}$ and the marked points on $C_-$ will be indexed by $P_{\bullet,-}$. Assume that the point $P_i=P$ lies in the component $C_+$. The preimages of $Q$ via the normalization morphism $\eta \colon \widetilde{C}= C_+ \sqcup C_- \to C$ are the points $Q_+$ and $Q_-$ with local coordinates $t_+$ and $t_-$.

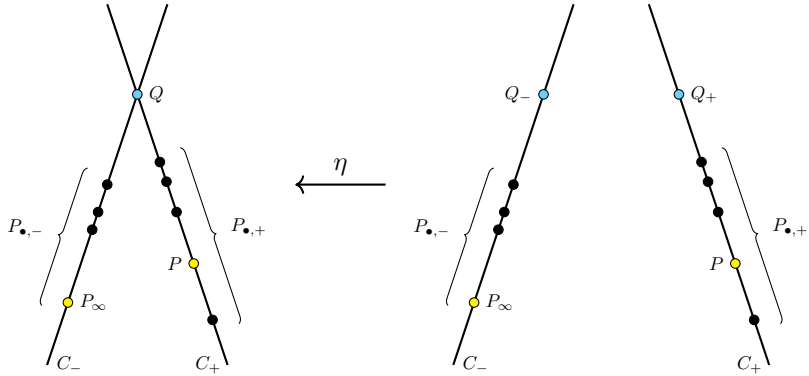
\begin{figure}[h!]
    \centering
\begin{tikzpicture}[scale=0.6]

\draw[thick] (-2,-6) -- (2/3,2);

\filldraw[color=black] (-2/2,-6/2) circle[radius=3pt];
\filldraw[color=black] (-2/2.3,-6/2.3) circle[radius=3pt];
\filldraw[color=black] (-2/3,-6/3) circle[radius=3pt];
\draw[fill=yellow] (-2/1.3,-6/1.3) circle[radius=3pt] node [right,scale=0.7] {$\; P_\infty$};
\node [right,scale=0.7] at (-2,-6) {$\, C_-$};
\draw [decorate, decoration={brace, raise=9pt}] (-2/1.3*1.05,-6/1.3*1.05) -- (-2/3*0.9,-6/3*0.9) node [midway, above left,scale=0.7] {$P_{\bullet, -} \qquad$};

\draw[thick] (2,-6) -- (-2/3,2);
\filldraw[color=black] (2/1.2,-6/1.2) circle[radius=3pt];
\filldraw[color=black] (2/4,-6/4) circle[radius=3pt];
\filldraw[color=black] (2/2.3,-6/2.3) circle[radius=3pt];
\filldraw[color=black] (2/3.1,-6/3.1) circle[radius=3pt];
\draw[fill=yellow] (2/1.6,-6/1.6) circle[radius=3pt] node [left,scale=0.7] {$P\;$};
\node [left,scale=0.7] at (2,-6) {$C_+$};
\draw [decorate, decoration={brace, mirror, raise=9pt}] (2/1.2*1.05,-6/1.2*1.05) -- (2/4*0.9,-6/4*0.9) node [above right, midway,scale=0.7] {$\qquad P_{\bullet, +}$};

\draw[fill=white!50!cyan] (0,0) circle[radius=3pt] node [right,scale=0.7] {$\; Q$};

\draw [thick,->] (5.5,-2) --(3.5,-2) node[midway, above] {$\eta$};

\begin{scope}[shift={(9,0)}]
\draw[thick] (-2,-6) -- (2/3,2);

\filldraw[color=black] (-2/2,-6/2) circle[radius=3pt];
\filldraw[color=black] (-2/2.3,-6/2.3) circle[radius=3pt];
\filldraw[color=black] (-2/3,-6/3) circle[radius=3pt];
\draw[fill=yellow] (-2/1.3,-6/1.3) circle[radius=3pt] node [right,scale=0.7] {$\; P_\infty$};
\node [right,scale=0.7] at (-2,-6) {$\, C_-$};
\draw [decorate, decoration={brace, raise=9pt}] (-2/1.3*1.05,-6/1.3*1.05) -- (-2/3*0.9,-6/3*0.9) node [midway, above left,scale=0.7] {$P_{\bullet, -} \qquad$};

\draw[fill=white!50!cyan] (0,0) circle[radius=3pt] node [left,scale=0.7] {$Q_- \;$};

\end{scope} 
\begin{scope}[shift={(12,0)}]
\draw[thick] (2,-6) -- (-2/3,2);
\filldraw[color=black] (2/1.2,-6/1.2) circle[radius=3pt];
\filldraw[color=black] (2/4,-6/4) circle[radius=3pt];
\filldraw[color=black] (2/2.3,-6/2.3) circle[radius=3pt];
\filldraw[color=black] (2/3.1,-6/3.1) circle[radius=3pt];
\draw[fill=yellow] (2/1.6,-6/1.6) circle[radius=3pt] node [left,scale=0.7] {$P\;$};
\node [left,scale=0.7] at (2,-6) {$C_+$};
\draw [decorate, decoration={brace, mirror, raise=9pt}] (2/1.2*1.05,-6/1.2*1.05) -- (2/4*0.9,-6/4*0.9) node [above right, midway,scale=0.7] {$\qquad P_{\bullet, +}$};

\draw[fill=white!50!cyan] (0,0) circle[radius=3pt] node [right,scale=0.7] {$\; Q_+$};
\end{scope}
\end{tikzpicture}
\caption{Normalization map}
    \label{fig:Cnodal}
\end{figure}

As in the smooth case, the goal is to construct an element of $\mathcal{L}_{C\setminus P_\bullet}(V)$ such that \eqref{eq:DegreeId} holds. To do so, we view $\mathcal{L}_{C \setminus P^\bullet}(V)$ as consisting of elements of $\mathcal{L}_{\widetilde{C}, P_\bullet \sqcup Q_\bullet}(V) =\mathcal{L}_{C_+\setminus P_{\bullet, +} \sqcup Q_+}(V) \oplus \mathcal{L}_{C_-\setminus P^\bullet_- \sqcup Q_-}(V)$ satisfying conditions described in \cite[Proposition   3.3.1]{dgt2} and stated here for convenience. For this purpose recall that $\mathfrak{L}_{Q_\pm}(V) \cong \mathfrak{L}(V)$ is filtered so that it admits a triangular decomposition $\mathfrak{L}(V)=\mathfrak{L}(V)_{<0} \oplus \mathfrak{L}(V)_0 \oplus \mathfrak{L}(V)_{>0}$. Let $\sigma_{\Qpm} \in \mathfrak{L}_{\Qpm}(V)$ be the image of $\sigma \in \mathcal{L}_{\widetilde{C}\setminus P_\bullet\sqcup Q_\bullet}(V)$, and let $\left[\sigma_{\Qpm}\right]_0$ be the image of $\sigma_{\Qpm}$ under the projection $\mathfrak{L}_{\Qpm}(V) \cong \mathfrak{L}(V)\rightarrow \mathfrak{L}(V)_0$. The involution $\vartheta$ of $\mathfrak{L}(V)$, which restricts to an involution on $\mathfrak{L}(V)_0$, is given for a homogeneous element $B\in V$ of degree $b$ by
\[
\vartheta\left(B_{[b-1]} \right) = (-1)^{b-1} \sum_{i\geq 0} \frac{1}{i!} L_1^i B_{[b - i -1]}.
\] With this notation \cite[Proposition   3.3.1]{dgt2} says that 
\begin{equation*} \label{eq:prop331}
\mathcal{L}_{C\setminus P_\bullet}(V) =
\left\{ \sigma \in \mathcal{L}_{\widetilde{C}\setminus P_\bullet\sqcup Q_\bullet}(V) \, \Bigg| \, 
\begin{array}{l}
\sigma_{\Qp}, \sigma_{\Qm}\in \mathfrak{L}(V)_{\leq 0}, \\ \mbox{and}\,\,\left[\sigma_{\Qm}\right]_0 = \vartheta\left( \left[\sigma_{\Qp}\right]_0\right) 
\end{array}
\right\}.
\end{equation*}

Following the argument of \ref{sec:CaseOne}, we can show that there exists an element  $\sigma_+ = A \otimes \mu$ of $\mathcal{L}_{C_+\setminus P_{\bullet,+}}(V) \subseteq \mathcal{L}_{C_+ \setminus P_{\bullet,+} \sqcup Q_+}(V)$ where $\mu$ is a form which has a pole of order $j$ at $P$ and it is regular at other points. The expansion of $\sigma_+$ at the other points $P_{k,+} \neq P$ can be seen as an endomorphism of $W^k$ of degree less than or equal to zero as in \eqref{eq:boundsigmaP}. The expansion of $\sigma_+$ at the point $Q_+$ can be written as the element
\begin{align} \label{eq:sigmaQ+}
     {\sigma_+}_{Q_+} = \sum_{i \geq 0} a_i A_[i] \in \mathfrak{L}(V).
\end{align} Note that the components of ${\sigma_+}_{Q_+}$ have non positive degree, and $[\sigma_+]_0:=[{\sigma_+}_{Q_+}]_0= a_0 A_{[0]}$.

To produce an element in $\mathcal{L}_{C \setminus P_\bullet}(V)$ we need to construct an element $\sigma_- \in \mathcal{L}_{C_-\setminus P_{\bullet,-}}(V)$ which is compatible with $\sigma_+$. The compatibility condition requires that 
\begin{equation} [\sigma_-]_0 = \vartheta[\sigma_+]_0 = a_0 A_{[0]} + a_0 L_1(A)_{[-1]} = a_0A_{[0]} + a_1 \mathbf{1}_{[-1]}
\end{equation} for some $a_1 \in \CC$, since $V$ is of CFT-type.

We are left to show that there is an element $\sigma_-$ in $\mathcal{L}_{C_-\setminus P_{\bullet,-} \sqcup Q_-}(V)$ whose image in $\mathfrak{L}(V)_0$ is $a_0A_{[0]} + a_1 \mathbf{1}_{[-1]}$. To do so we consider the two components independently and use the fact that $\mathcal{L}_{C_-\setminus P_{\bullet,-} \sqcup Q_-}(V)$  is a quotient of $\bigoplus_{k \geq 0} \text{H}^0(C_- \setminus P_{\bullet,-} \sqcup Q_-,V_k\otimes \omega^{1-k})$.

We first observe that $V_1=\text{H}^0(C_- ,V_1\otimes \mathscr{O}) \subseteq \bigoplus_{k\geq 0} \text{H}^0(C_- \setminus P_{\bullet,-} \sqcup Q_-,V_k\otimes \omega^{1-k})$. Hence  we can lift $a_0A_{[0]}$ to an element $\beta \in \mathcal{L}_{C_-\setminus P_{\bullet,-} \sqcup Q_-}(V)$ such that
\begin{equation} \label{eq:beta}
    \beta|_{Q_-} = a_0 A \qquad \text{ and } \qquad   \beta|_{P_j^-} = a_0A.
\end{equation}

After observing that $\text{H}^0(C_-, V_0 \otimes  \omega(Q_-))=0$, let $P_\infty$ be any point in $P_{\bullet,-}$, and note that $V_0$ $\cong \text{H}^0(C_-, V_0 \otimes \omega(Q_-+P_\infty)) \subseteq \bigoplus_{k \geq 0} \text{H}^0(C_- \setminus P_{\bullet,-} \sqcup Q_-,V_k\otimes \omega^{1-k})$ is one dimensional. Hence there exists an element $\gamma \in \mathcal{L}_{C_-\setminus P_{\bullet,-} \sqcup Q_-}(V)$ satisfying
\begin{equation}\label{eq:gamma}
    \gamma|_{Q_-}= a_1 \mathbf{1}\otimes t_-^{-1} + \mathbf{1} \otimes F(t_-), \quad
    \gamma|_{P_\infty} = a_\infty \mathbf{1} \otimes {t_\infty}^{-1} + \mathbf{1}\otimes G(t_\infty),  \quad   
    \gamma|_{P_{j,-} \neq P_\infty} = a_j\otimes  H(t_j) 
\end{equation} with $F(t_-) \in \CC\llbracket t_- \rrbracket  $, $G(t_\infty) \in \CC\llbracket t_\infty \rrbracket  $ and $H(t_j)\in \CC\llbracket t_j \rrbracket  $.

It follows that the pair $(\sigma_+, \beta + \gamma)$ defines an element $\sigma$ of $\mathcal{L}_{C \setminus P_\bullet}(V)$. We are left to prove that under this choice \eqref{eq:DegreeId} holds, but this follows from \eqref{eq:beta} and \eqref{eq:gamma} and the fact that $\sigma_+$ has poles only at $P$. \endproof

\section{The Corollaries}
\label{sec:corollaries}
The proof of Theorem \ref{thm:GG}, and definition of admissible modules, implies the following statement.

\begin{corx} \label{cor:fingen} The sheaf of coinvariants $\mathbb{V}_0(V,W^{\bullet})$ on $\overline{\mathcal{M}}_{0,n}$, defined by $n$ simple admissible modules over a vertex operator algebra $V$ of CFT-type and strongly generated in degree 1, is coherent.
\end{corx}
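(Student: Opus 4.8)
The plan is to read off coherence directly from the surjectivity of the comparison map $\phi$ proved in Theorem \ref{thm:GG}, combined with the finiteness that is hard-wired into the definition of an admissible module. The only genuinely new observation needed is that the source of $\phi$ is already a coherent sheaf; everything substantive has been done in the proof of the theorem.

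First I would check that the constant sheaf $\mathcal{W}^\bullet_0$ of Lemma \ref{CLemma} is coherent. By the definition of an admissible $V$-module each graded piece is finite dimensional, $\dim W^j_i<\infty$; in particular the lowest weight space $W^j_0$ is finite dimensional, so the fiber $\bigotimes_j W^j_0$ is a finite dimensional $\mathbb{C}$-vector space. By the $k=0$ case of Lemma \ref{CLemma} the sheaf $\mathcal{W}^\bullet_0$ is constant with this fiber, i.e.
\[
\mathcal{W}^\bullet_0 \;\cong\; \Bigl(\bigotimes_j W^j_0\Bigr)\otimes \mathcal{O}_{\overline{\mathcal{M}}_{0,n}},
\]
a free $\mathcal{O}_{\overline{\mathcal{M}}_{0,n}}$-module of finite rank, hence coherent.

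Next I would specialize the proof of Theorem \ref{thm:GG} to $g=0$: that argument shows precisely that every element of $W^\bullet$ is congruent modulo $\mathcal{L}_{C\setminus P_\bullet}(V)\cdot W^\bullet$ to an element of $W^\bullet_0$, which is exactly surjectivity of $\phi$ on fibers, and hence surjectivity of the morphism $\phi\colon \mathcal{W}^\bullet_0 \to \mathbb{V}_0(V;W^\bullet)$ of \eqref{eq:CSheaf}. Consequently $\mathbb{V}_0(V;W^\bullet)=\operatorname{im}(\phi)$ is a quotient of the coherent sheaf $\mathcal{W}^\bullet_0$. Since $\overline{\mathcal{M}}_{0,n}$ is a Noetherian (indeed smooth projective) variety, coherent sheaves form an abelian category closed under quotients, and I would conclude that $\mathbb{V}_0(V;W^\bullet)$ is coherent.

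I do not expect any real obstacle here: the whole force of the statement is the surjectivity of $\phi$, which is the content of Theorem \ref{thm:GG}, and the supplementary input is merely the finite dimensionality of the degree-zero spaces $W^j_0$, immediate from the definition of admissible module. That finiteness is what upgrades the a priori only quasi-coherent quotient defining coinvariants to a sheaf of finite type, and thus the one point to state carefully is that coherence of the target follows formally from coherence of the source once $\phi$ is known to be onto.
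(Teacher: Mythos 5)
Your proposal is correct and follows essentially the same route as the paper: the authors likewise note that the finite dimensionality of each $W^i_0$ (from the definition of admissible module) together with Lemma \ref{CLemma} makes $\mathcal{W}^\bullet_0$ a finite-rank (constant) vector bundle, and then invoke the surjectivity of $\phi$ from the proof of Theorem \ref{thm:GG} to conclude that $\mathbb{V}_0(V;W^\bullet)$ is a quotient of a coherent sheaf, hence coherent. Nothing is missing.
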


\begin{proof} By definition, for every $i \in \{1, \dots, n\}$, the admissible $V$-module $W^i$  has finite dimensional lowest weight component $W^i_0$. This fact, together with  Lemma \ref{CLemma}, give that $\mathcal{W}_0^\bullet$ is a vector bundle of finite rank on $\overline{\mathcal{M}}_{0,n}$.  By the proof of Theorem \ref{thm:GG}, the map $\phi \colon \mathcal{W}_0^\bullet \to \VV_0(V;W^\bullet)$ defined in \eqref{eq:CSheaf} is surjective.  From this and the coherence of $\mathcal{W}_0^\bullet$, we therefore deduce the coherence of $\VV_0(V;W^\bullet)$, giving the assertion. 
\end{proof}

A further consequence, owing to both Corollary \ref{cor:fingen} and \cite[Theorem 7.1]{dgt1}, is the following:
\begin{corx} \label{cor:vb}  The sheaf of coinvariants $\mathbb{V}_0(V,W^{\bullet})$, defined by $n$ simple admissible modules over a vertex operator algebra $V$ of CFT-type and strongly generated in degree 1, is locally free of finite rank on ${\mathcal{M}}_{0,n}$. 
\end{corx}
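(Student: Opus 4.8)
The plan is to deduce local freeness on the open base ${\mathcal{M}}_{0,n}$ by combining the coherence established in Corollary~\ref{cor:fingen} with the connection that sheaves of coinvariants carry over the locus of smooth curves. First I would restrict to the open subvariety ${\mathcal{M}}_{0,n}\subset \overline{\mathcal{M}}_{0,n}$ parametrizing smooth pointed curves: by Corollary~\ref{cor:fingen} the sheaf $\mathbb{V}_0(V,W^\bullet)$ is coherent on $\overline{\mathcal{M}}_{0,n}$, so its restriction to ${\mathcal{M}}_{0,n}$ is again coherent. Since for $n\ge 3$ the space ${\mathcal{M}}_{0,n}$ is a smooth variety over a field of characteristic zero, it then suffices to promote coherence to local freeness there.

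The geometric input is \cite[Theorem 7.1]{dgt1}, which equips the sheaf of coinvariants with a projectively flat connection over the smooth locus. The point I would emphasize is that this connection is constructed from the conformal structure of $V$ alone---from the action of the Virasoro element together with the operators $\nabla=L_{-1}\oplus\partial_{t_i}$ defining the ancillary Lie algebras $\mathfrak{L}_{P_i}(V)$---and uses neither rationality nor $C_2$-cofiniteness. Hence it applies verbatim to any $V$ of CFT-type that is strongly generated in degree $1$, and $\mathbb{V}_0(V,W^\bullet)|_{{\mathcal{M}}_{0,n}}$ is a coherent $\mathcal{O}_{{\mathcal{M}}_{0,n}}$-module admitting a connection.

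Finally I would invoke the standard fact that, on a smooth variety in characteristic zero, a coherent sheaf carrying an integrable connection is locally free of finite rank: it is then an $\mathcal{O}$-coherent $\mathcal{D}$-module, and such modules are automatically vector bundles. The connection in \cite[Theorem 7.1]{dgt1} is only projectively flat, but locally on ${\mathcal{M}}_{0,n}$ it lifts to a genuine flat connection (the projective ambiguity being a locally trivial line bundle with connection), so the $\mathcal{D}$-module argument applies after trivializing the twist. Equivalently, parallel transport identifies nearby fibers, forcing the fiber-dimension function $x\mapsto \dim_{k(x)}\bigl(\mathbb{V}_0(V,W^\bullet)\otimes k(x)\bigr)$ to be locally constant, and a coherent sheaf with locally constant fiber dimension on a reduced scheme is locally free. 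Applying this on the smooth, reduced ${\mathcal{M}}_{0,n}$ yields the corollary.

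The main obstacle will be confirming that the construction underlying \cite[Theorem 7.1]{dgt1} is genuinely available in this broader generality, i.e.\ that the connection over the smooth locus exists for VOAs assumed only to be of CFT-type and strongly generated in degree $1$, without the rationality or cofiniteness hypotheses under which analogous statements are often phrased. Once that is verified, the implication \emph{coherent $+$ connection on a smooth base $\Rightarrow$ locally free} is purely formal and introduces no further difficulty.
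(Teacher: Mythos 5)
Your argument is essentially identical to the paper's proof: coherence from Corollary~\ref{cor:fingen} restricted to the smooth locus, plus the projectively flat connection of \cite[Theorem 7.1]{dgt1}, yields local freeness, and the paper explicitly notes (as you hoped to verify) that the hypotheses of that theorem are weaker than those of the corollary, so it applies in this generality. Your added detail on lifting the projective connection locally to a genuine flat connection is a correct filling-in of the standard step the paper leaves implicit.
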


\begin{proof} Since by Corollary \ref{cor:fingen}, we have that $\VV_0(V;W^\bullet)$ is coherent, it is enough to show that the restriction of $\VV_0(V;W^\bullet)$ to $\mathcal{M}_{0,n}$ is equipped with a projectively flat connection. This is guaranteed by \cite[Theorem 7.1]{dgt1}, whose hypotheses are actually weaker than what is assumed for Corollary \ref{cor:vb}.
\end{proof} 
Finally, applying results from \cite{dgt2}, we obtain:
\begin{corx} \label{cor:vbBar} For $V$ a rational, $C_2$-cofinite vertex operator algebra of CFT-type and strongly generated in degree 1, the sheaf $\mathbb{V}_0(V,W^{\bullet})$ defined by $n$ simple admissible $V$ modules is a globally generated vector bundle on $\overline{\mathcal{M}}_{0,n}$. 
\end{corx}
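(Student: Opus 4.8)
The plan is to combine Theorem \ref{thm:GG} with the vector bundle result of \cite{dgt2}, the only genuine point requiring attention being to confirm that the sheaf of coinvariants is actually \emph{defined} on $\overline{\mathcal{M}}_{0,n}$, so that the second assertion of Theorem \ref{thm:GG} (global generation on $\overline{\mathcal{M}}_{0,n}$, and not merely on $\overline{\mathcal{J}}_{0,n}$) is available.

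First I would record that, under the present hypotheses---$V$ rational, $C_2$-cofinite, and of CFT-type---the sheaf $\mathbb{V}_0(V, W^\bullet)$ is locally free of finite rank on all of $\overline{\mathcal{M}}_{0,n}$. This is exactly the content of \cite{dgt2}, which upgrades the local freeness on the open locus $\mathcal{M}_{0,n}$ established in Corollary \ref{cor:vb} to local freeness across the boundary; rationality is used to control the ranks of coinvariants under degeneration via the factorization theorem, so that the fiber dimension is constant on $\overline{\mathcal{M}}_{0,n}$.

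Next I would verify the descent hypothesis needed to invoke Theorem \ref{thm:GG} on $\overline{\mathcal{M}}_{0,n}$. By Remark \ref{rmk:Descent}, the sheaf $\mathbb{V}^{\mathcal{J}}(V;W^\bullet)$ on $\overline{\mathcal{J}}_{0,n}$ descends to a sheaf on $\overline{\mathcal{M}}_{0,n}$ whenever the conformal dimensions of the modules $W^i$ are rational. Since $V$ is rational and $C_2$-cofinite, the conformal dimensions of its simple modules are rational---for instance by the $\operatorname{SL}(2,\mathbb{Z})$-invariance of the span of ordinary and pseudo-trace functions proved in \cite{MiyamotoModular}, as recalled in Remark \ref{rmk:integralweights}. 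Hence the sheaf is defined on $\overline{\mathcal{M}}_{0,n}$.

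Finally, with the sheaf defined on $\overline{\mathcal{M}}_{0,n}$ and $V$ strongly generated in degree $1$, Theorem \ref{thm:GG} gives that $\mathbb{V}_0(V, W^\bullet)$ is globally generated on $\overline{\mathcal{M}}_{0,n}$; combined with the first step this is precisely the assertion of the corollary. I do not expect a substantive obstacle, as the analytic work already resides in Theorem \ref{thm:GG} and in \cite{dgt2}. The only step demanding care is the bookkeeping of hypotheses: one must check that ``rational and $C_2$-cofinite'' simultaneously supplies \emph{both} the rationality of conformal weights (securing descent, and thus the applicability of the ``if defined'' clause of Theorem \ref{thm:GG}) \emph{and} the local freeness across the boundary from \cite{dgt2}, so that the conclusion is legitimately a statement about a globally generated vector bundle on the compactified space.
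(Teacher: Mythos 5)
Your proposal is correct and follows essentially the same route as the paper, whose proof is the one-line observation that the corollary follows from the VB Corollary of \cite{dgt2} together with Theorem \ref{thm:GG}. The extra care you take in checking that rationality and $C_2$-cofiniteness give rational conformal dimensions (hence descent to $\overline{\mathcal{M}}_{0,n}$, activating the ``if defined'' clause of Theorem \ref{thm:GG}) is exactly the bookkeeping the paper leaves implicit in its citation.
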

\begin{proof} This follows from \cite[VB Corollary]{dgt2} and Theorem \ref{thm:GG}.
\end{proof}

\section{Higher genus examples}\label{Holomorphic}

In this section, we describe vector bundles  of coinvariants on $\overline{\mathcal{M}}_{g,n}$ defined  by holomorphic vertex algebras of CFT-type, which are globally generated for positive genus (Example \ref{Hol}).  As explained in Remark \ref{NotNecSur} global generation isn't given by Theorem \ref{thm:GG}.

\begin{example}\label{Hol} A VOA is holomorphic if it is self-contragredient and the only irreducible $V$-module is itself. By \cite[\S 1.6.1]{dgt3}, using factorization, it was shown that  bundles of coinvariants defined by holomorphic VOAs $V$ of CFT-type have rank one, with Chern class $\frac{c_V}{2}\lambda$, where $c_V$ is the central charge of $V$.  Line bundles are globally generated if their first Chern class is base point free.  It is well-known that $\lambda$, the first Chern class of the Hodge bundle, is  base point free, and nontrivial if $g>0$. As explained in \cite[\S 3]{LamShim}, by \cite[Thms 1 and 2]{DM}, any holomorphic VOA of CohFT-type has positive central charge (in fact, $c_V$ is divisible by $8$).  In particular,  sheaves of coinvariants defined by holomorphic vertex operator algebras are globally generated on $\overline{\mathcal{M}}_{g,n}$.  

Moreover, if $c_V\le 24$, the character of $V$ and the degree $1$ component $V_1$ are uniquely determined, and in particular there are many examples  for which $V_1\ne \emptyset$. 
For instance, if $V$ is a $C_2$-cofinite, holomorphic vertex operator algebra of CFT-type, (in the language of \cite{DM}, $V$ is strongly rational and holomorphic) then for $c=8$, $V=V_L$ is the Lattice VOA given by the $E_8$ root lattice \cite[Theorem 1]{DongMasonHolomorphic}. 
In particular, the affine VOA bundles $\mathbb{V}_g(L_1({\mathfrak{e}_8}), W^{\bullet})$ have first Chern classes which are multiples of $\lambda$, so are base point free (see also \cite[Corollary  6.3]{fakhr} and \cite[Remark 6.4]{fakhr}). 
If $c=16$, then $V=V_L$, where $L$ is one of the two unimodular rank $16$ lattices \cite[Theorem 2]{DongMasonHolomorphic}, and if $c_V=24$, then if $V_1$ is abelian of rank $24$, $V$ is isomorphic to the Leech lattice VOA \cite{DongLiMasonModular}, and if $V_1$ is zero, then $V\cong V^{\natural}$.
If on the other hand $V_1$ is semi-simple, then relations between the dual Coxeter number, the dimension, and the level of its affinization and other constraints led Shelleken, in \cite{Schell} to propose a list of 69 other Lie algebra structures for $V_1$, that he conjectured would determine these holomorphic VOAs of conformal dimension $24$ (these $71$ make up what is called Schellekens’ list). As described by Lam and Shimakura in \cite{LamShim}, all such $V$ have now been constructed  \cite{Borcherds, flm, DongEvenLattice, DolanGoddardMontague, Lam2011, LamShimakura2012, Miyamoto2013, SagakiShimakura, vEMS:2020:construction, MollerCyclic, LamShimakuraHol, LamLin}.  The last cases required substantial development of orbifold theory, and were completed in \cite{vEMS:2020:construction, vEMS:2020:dimension}.  The uniqueness of these VOAs was proven in \cite{vEMS:2020:construction, vEMS:2020:dimension,  LamShimakura:2019:reverse, LamShimakura:2020:orbifold, LamShimakura:2020:inertia, LamShimakura:2020:orbifold}. 
\end{example}

\begin{remark}\label{NotNecSur}Since holomorphic vertex operator algebras of CFT-type have one dimensional degree zero components, associated sheaves $\mathcal{W}^\bullet_0$  have rank one.     However, although $\mathbb{V}_g(V, W^{\bullet})$ are globally generated, we do not know if it is possible to prove that the map  \eqref{eq:CSheaf}  from Lemma \ref{CLemma} from $\mathcal{W}^\bullet_0$ to $\mathbb{V}_g(V, W^{\bullet})$ is surjective (see Question \ref{Q2}).
\end{remark}

\section{Discrete series bundles}\label{UnitaryVir}  Sheaves defined  from  the discrete  series representations of the Virasoro vertex algebra $\textit{Vir}_c$ were introduced in \cite{bfm}. Such VOAs are the simplest case of a family referred to as the minimal series principal  $W$-algebras $\mathscr{W}_k(\mathfrak{g})$ \cite{ArakawaW2, ALY}, and in case $\mathfrak{g}=\mathfrak{s}\ell_2$ one obtains  $\textit{Vir}_c$ (see Example \ref{eg:UnitaryW}).  The minimal series principal $W$-algebras arise in many contexts (see \cite{ALY} and references therein). Unlike affine VOAs, the minimal series principal $W$-algebras are not strongly generated in degree one \cite{ACL}, however, as discussed in Example \ref{eg:UnitaryW}, they are related to affine VOAs though a coset construction.  In \S \ref{VirDes} we describe $\textit{Vir}_c$ and their modules, afterwards giving the specific examples of the bundles they define.  But first in \S \ref{VirSummary} we give a brief summary of our findings about them.
    
\subsubsection{Summary}\label{VirSummary} Since the Zhu algebra $A(\textit{Vir}_c)$ is commutative, one has that for any bundle of coinvariants $\mathbb{V}(\textit{Vir}_c, W^{\bullet})$, the associated constant sheaf $\mathcal{W}_0^\bullet$ has rank $1$. As we show, one can cook up bundles of coinvariants $\mathbb{V}(\textit{Vir}_c, W^{\bullet})$  of ranks $0$, $1$, and $>1$. In all the examples we considered, if  $\mathbb{V}(\textit{Vir}_c, W^{\bullet})$ had rank one, then it was positive. If the rank was larger than one it was positive if and only if its modules satisfied an integral degree condition (See Definition \ref{IC} and Question \ref{que:IntDeg}).
    
\subsubsection{Description of discrete series $\textit{Vir}_{c}$ and their modules}\label{VirDes} Let $\mathrm{Vir}_{\ge 0}:= \mathbb{C}K \oplus z \mathbb{C}\llbracket z \rrbracket \partial_z$ be a Lie subalgebra of the Virasoro Lie algebra Vir, and let  $M_{c,h}:=U(\mathrm{Vir})\otimes_{U\left(\mathrm{Vir}_{\ge 0}\right)} \mathbb{C}\bf{1}$ be the Verma module  of highest weight $h\in \mathbb{C}$ and central charge $c \in \mathbb{C}$ ( $M_{c,h}$ is a module over $M_{c,0}$). There is a unique maximal proper submodule $J_{c,h} \subset M_{c,h}$. Set $L_{c,h}:=M_{c,h}/J_{c,h}$,  and $\textit{Vir}_{c} :=L_{c,0}$.  By \cite[Theorem 4.2 and Corollary  4.1]{WWang}, one has that  $\textit{Vir}_{c}$ is rational if and only if $c= c_{p,q}=1-6\frac{(p-q)^2}{pq}$, where $p$ and $q \in \{2,3,\ldots\}$ are relatively prime. By \cite[Lemma 12.3]{DongLiMasonModular} (see also \cite[Proposition~3.4.1]{ArakawaStrong})  $\textit{Vir}_{c}$ is $C_2$-cofinite for $c=c_{p,q}$,  and by \cite[Theorem 4.3]{FrenkelZhu}, $\textit{Vir}_{c}$ is of CFT-type.  By \cite[Theorem 4.2]{WWang} the modules $L_{c,h}$ are irreducible if and only if
\begin{equation*}\label{eq:h}
h=\frac{(np-mq)^2-(p-q)^2}{4pq}, \mbox{ with } 0<m<p,  \mbox{ and  } 0<n<q.
\end{equation*} Note that by definition $h$ is the conformal dimension of $L_{c,h}$. These vertex operator algebras are unitary if $|q-p|=1$.

\subsubsection{A particular example}\label{Vir1} Let $V=L_{\frac{1}{2},0}=\text{Vir}_{c_{3,4}}$ be the discrete series vertex operator algebra with central charge $\frac{1}{2}$. This vertex operator algebra has only two non trivial simple modules $W_1=L_{\frac{1}{2},\frac{1}{2}}$ and $W_2=L_{\frac{1}{2},\frac{1}{16}}$. Divisors associated to bundles of rank zero are trivial, and hence are trivially nef. We show this rank is determined by the parity of $W_1$ and $W_2$:

\begin{proposition}\label{VirRank}On $\overline{\mathcal{M}}_{0,i+j+k}$, for $i+j+k\ge 3$, for $V=V_{3,4}$, $W_1=W_{(1,3)}$, and $W_2=W_{(1,2)}$,
    \begin{equation*}
    {\rm{rank}}(\mathbb{V}_0(V, \{V^{i}, W_{1}^{j},W_{2}^{k}\}))=
    \begin{cases}
      2^{\ell}, & \text{if }\ k=2\ell+2 \text{ with } \ell \geq 0; \\
      1, & \text{if }\ j \text{ is even and } k=0; \\
      0, & \text{otherwise}.\\
    \end{cases}
    \end{equation*}
\end{proposition}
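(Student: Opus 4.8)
The plan is to convert the rank computation into a problem about the fusion ring of the Ising VOA $V=\textit{Vir}_{c_{3,4}}$, using the factorization theorem recalled in \S\ref{sec:ranksformula}, and then to solve the resulting recursion. Throughout I write $V, W_1, W_2$ for the three simple modules and recall that each is self-contragredient, so $W'\cong W$. First I would record that inserting copies of the vacuum module $V$ leaves the rank unchanged: applying \eqref{eq:factor2} with $g=i=0$ and taking $I$ to consist of the $V$-point together with one point carrying $W^a$, the three-point factor $\mathrm{rank}\,\mathbb{V}_0(V;V,W^a,W)$ equals $\delta_{W,W^a}$ (propagation of vacua), so the sum over the node module $W$ collapses to a single term in which the $V$-point has been deleted and $W^a$ retained. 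Hence the $i$ copies of $V$ may be discarded, and it suffices to compute $r(j,k):=\mathrm{rank}\,\mathbb{V}_0(V;W_1^{j},W_2^{k})$ for $j+k\ge 3$ (the base case $n=3$ being a three-point fusion coefficient directly).

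Next I would invoke the classical Ising fusion rules $W_1\boxtimes W_1=V$, $W_1\boxtimes W_2=W_2$, and $W_2\boxtimes W_2=V\oplus W_1$, which list exactly the nonzero three-point ranks $\mathrm{rank}\,\mathbb{V}_0(V;W^a,W^b,W^c)$ for the $(3,4)$ minimal model. Iterating \eqref{eq:factor2} so as to peel off two marked points at a time (taking $|I|=2$, which lowers the number of points by one and sums over the simple module placed on the node) identifies $r(j,k)$ with the multiplicity of $V$ in the fusion product $W_1^{\boxtimes j}\boxtimes W_2^{\boxtimes k}$. Writing $N^{(k)}_X$ for the multiplicity of a simple module $X$ in $W_2^{\boxtimes k}$, the fusion rules show that $W_2^{\boxtimes k}$ is a multiple of $W_2$ for $k$ odd and a combination of $V$ and $W_1$ for $k$ even, with $N^{(k+1)}_{W_2}=N^{(k)}_V+N^{(k)}_{W_1}$ and $N^{(k+1)}_V=N^{(k+1)}_{W_1}=N^{(k)}_{W_2}$. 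Starting from $N^{(0)}_V=1$, a one-line induction gives $N^{(k)}_V=N^{(k)}_{W_1}=2^{k/2-1}$ for even $k\ge 2$.

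Finally I would fuse in the $j$ copies of $W_1$. Since $W_1^{\boxtimes j}=V$ for $j$ even and $=W_1$ for $j$ odd, and since fusing with $W_1$ merely interchanges the multiplicities of $V$ and $W_1$ while fixing that of $W_2$, the symmetry $N^{(k)}_V=N^{(k)}_{W_1}$ forces $r(j,k)$ to be independent of the parity of $j$ whenever $k\ge 2$. Thus the multiplicity of $V$ in $W_1^{\boxtimes j}\boxtimes W_2^{\boxtimes k}$ is $2^{k/2-1}$ for even $k\ge 2$, is $1$ exactly when $k=0$ and $j$ is even, and is $0$ when $k$ is odd or when $k=0$ and $j$ is odd. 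Setting $k=2\ell+2$ turns $2^{k/2-1}$ into $2^{\ell}$ and reproduces the stated formula.

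I expect the main obstacle to be twofold: identifying the base-case three-point ranks with the Ising fusion rules inside the present VOA framework, and the bookkeeping in the last step. The crux is the symmetry $N^{(k)}_V=N^{(k)}_{W_1}$ for even $k\ge 2$, which is precisely what makes the $W_1$-insertions harmless and removes any dependence on the parity of $j$ once $k\ge 2$; everything else is a routine induction once factorization has reduced the statement to the fusion ring.
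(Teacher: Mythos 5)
Your proposal is correct, and it rests on exactly the same two inputs as the paper's proof: propagation of vacua to discard the vacuum insertions, and the three-point ranks of the $(3,4)$ minimal model (the paper's Lemma~\ref{BB}, cited from \cite{DMZ:94}, which is precisely the list of Ising fusion rules you invoke), fed into the factorization formula \eqref{eq:factor2}. Where you differ is in how the resulting recursion is solved. The paper runs a fairly lengthy double induction on $j$ and $\ell$ directly on the ranks $\rank \VV_0(V,\{W_1^j,W_2^k\})$, applying \eqref{eq:factor2} several times and checking each case separately. You instead track the full multiplicity vector $(N^{(k)}_V,N^{(k)}_{W_1},N^{(k)}_{W_2})$ of $W_2^{\boxtimes k}$ in the fusion ring and observe the symmetry $N^{(k)}_V=N^{(k)}_{W_1}=2^{k/2-1}$ for even $k\ge 2$, which makes the $W_1$-insertions manifestly harmless and collapses the case analysis to a one-line induction. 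This is a cleaner and more conceptual organization of the same computation; its only additional (implicit) requirement is the associativity of the fusion product defined by the three-point ranks, which is exactly the independence of \eqref{eq:factor2} from the choice of partition $I$, i.e.\ a consequence of the factorization theorem the paper already uses. Your identification of $r(j,k)$ with the multiplicity of $V$ in $W_1^{\boxtimes j}\boxtimes W_2^{\boxtimes k}$ uses self-duality of all three modules, which holds here, so there is no gap.
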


    \begin{example}\label{eg:VirC} On $\overline{\mathcal{M}}_{0,4}$ \begin{equation*}
    {\rm{deg}}(\mathbb{V}_0(V, \{W_{1}^{j},W_{2}^{k}\}))=
    \begin{cases}
    1, & \text{if }\ j=k=2;\\
    2, & \text{if }\ j=4;\\
    -1, &  \text{if }\ k=4.\\
    \end{cases}
    \end{equation*} 
    In particular, the line bundles $\mathbb{V}_0(V, \{W_{1}^{2},W_{2}^{2}\})$ and $\mathbb{V}_0(V, \{W_{1}^{4}\})$ are globally generated on $\overline{\mathcal{M}}_{0,4}$, while the bundle $\mathbb{V}_0(V, \{W_{2}^{4}\})$ which has rank $2$ is not  globally generated on $\overline{\mathcal{M}}_{0,4}$.
    \end{example}

\begin{example}\label{ChernExamples}
The bundle $\mathbb{V}_0(V, \{W_{1},W_{2}^{8}\})$ on $\overline{\mathcal{M}}_{0,9}$  satisfies the integral degree condition (see Definition \ref{IC}) and by Proposition \ref{VirRank} has rank $2^{3}=8$. We check its first Chern class
  \[c_1(\mathbb{V}_0(V, \{W_{1},W_{2}^{8}\}))=8\left(\frac{1}{2}\psi_1+\frac{1}{16}\sum_{i=2}^{9}\psi_i
  -\frac{1}{16}\sum_{I \subset \{1\}^c, |I| \in \{1,3,5,7\}}\delta_{I\cup \{1\}}\right),\]
is $F$-nef:  Intersecting with any $F$-curve with the point $P_1$ labeled by $W_1$ on the spine will be $\ge 4$.  
\end{example}

\begin{example}
The bundle $\mathbb{V}_0(V, \{W_{2}^{16}\}$ on $\overline{\mathcal{M}}_{0,16}$ satisfies the integral degree property (Definition \ref{IC}), and by Proposition \ref{VirRank} has rank $2^{7}$.  We check that its first Chern class
\[D=c_1(\mathbb{V}_0(V, \{W_{2}^{16}\}))=2^{7}\left(\frac{1}{16}\sum_{i=1}^{16}\psi_i  -\frac{1}{16}\sum_{|I| \  \mbox{odd}}\delta_{I}\right),\]
is nef.  To see this we use that any $S_n$-invariant $F$-nef divisor is nef for $n\le 24$, and then we just have to intersect with $F$-curves which are determined entirely by the number of points in the partition.  Denote such an $F$-curve by $F_{a,b,c,d=16-(a+b+c)}$, where  $1\le a\le b\le c\le d$ and we know that $a$ is either even or odd.  If $a$ is odd then we must have that one of $b$, $c$, or $d$ is odd.  Suppose that $a$ is odd.   If all three of $b$, $c$ and $d$ are also odd, then the intersection number is $D\cdot F_{a,b,c,d}=2^7\frac{4}{16}>0$.  We cannot have two of $b$, $c$ and $d$ odd.  So suppose just one of them is odd. In this case $D\cdot F_{a,b,c,d}=0$.  If none of $a$, $b$, $c$ or $d$ is odd, then $D\cdot F_{a,b,c,d}=0$.  So  $D$ non-negatively intersects all $F$-curves.  
\end{example}

\noindent Proposition \ref{VirRank} is proved by induction, using formulas from \S \ref{sec:ranksformula}, with base case dependent on: 
\begin{lemma}\label{BB}\cite{DMZ:94} \ For $V=V_{3,4}$, $W_1=W_{\frac{1}{2}}$, and  $W_2=W_{\frac{1}{16}}$  the dimension of $\mathbb{V}_0(V, W^{\bullet})$ on $\overline{\mathcal{M}}_{0,3}$ is  if given by  $(V,V,V)$,  $(V,W_1,W_1)$,  $(V, W_2,W_2)$, or $(W_1, W_2,W_2)$, and is zero otherwise. 
\end{lemma}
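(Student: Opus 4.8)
The plan is to reduce the statement to the fusion rules of the $c=\tfrac12$ minimal model. Over $\overline{\mathcal{M}}_{0,3}$, which is a single point, the fiber of $\mathbb{V}_0(V,W^\bullet)$ is the space of coinvariants of $W^1\otimes W^2\otimes W^3$ by the chiral Lie algebra $\mathcal{L}_{\mathbb{P}^1\setminus\{0,1,\infty\}}(V)$ of \S\ref{sec:Coinvariants}, for the standard three-pointed rational curve. First I would invoke the standard dictionary identifying the dual of this three-point coinvariant space on the sphere with the space of intertwining operators of type $\binom{(W^3)'}{W^1\ W^2}$; this identification is available because $V=V_{3,4}$ is rational and $C_2$-cofinite (recorded in \S\ref{VirDes}). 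Consequently $\dim\mathbb{V}_0(V,W^\bullet)=N_{W^1W^2}^{(W^3)'}$, the corresponding fusion coefficient.

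Next I would dispose of the duals and the ordering. Since the three simple $V$-modules $V$, $W_1$, $W_2$ have pairwise distinct conformal dimensions $0,\tfrac12,\tfrac1{16}$, and the contragredient preserves the lowest conformal dimension, each module is self-contragredient; hence $(W^3)'\cong W^3$ and the dimension equals the symmetric fusion number $N_{W^1W^2}^{W^3}$. As this number is totally symmetric in the three self-dual arguments, the dimension depends only on the unordered triple, so it suffices to treat the triples up to permutation of the marked points.

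Then I would read off the answer from the Ising fusion rules, established for this vertex operator algebra in \cite{DMZ:94} (and compatible with the rationality analysis of \cite{WWang}): with $V$ the unit, $W_1\boxtimes W_1\cong V$, $W_1\boxtimes W_2\cong W_2$, and $W_2\boxtimes W_2\cong V\oplus W_1$. Every triple containing a vacuum factor contributes $N_{VX}^{Y}=\delta_{X,Y}$, which is $1$ exactly for $(V,V,V)$, $(V,W_1,W_1)$, $(V,W_2,W_2)$. Among the remaining triples $(W_1,W_1,W_1)$, $(W_1,W_1,W_2)$, $(W_1,W_2,W_2)$, $(W_2,W_2,W_2)$, the coefficients are $0$, $0$, $1$, $0$ respectively. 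Hence the dimension is $1$ for the four listed triples and their permutations, and $0$ otherwise.

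The substantive input, and the main obstacle, is the fusion-rule computation itself, specifically the identity $W_2\boxtimes W_2\cong V\oplus W_1$: it is exactly the absence of a $W_2$ summand here that forces $(W_2,W_2,W_2)$ to give $0$ while $(W_1,W_2,W_2)$ gives $1$, thereby separating the two cases in the final list. This is the content supplied by \cite{DMZ:94}; alternatively one can recover all of these coefficients from the Verlinde formula applied to the explicit Ising $S$-matrix, which is valid for the rational, $C_2$-cofinite VOA $V$. The remaining steps, namely the coinvariants-to-intertwining-operator dictionary and the self-duality bookkeeping, are routine. Finally I would note that this lemma is needed only as the base case of the genus-$0$, three-point rank count feeding the induction of Proposition \ref{VirRank}.
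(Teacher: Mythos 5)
Your proposal is correct and rests on exactly the same substantive input as the paper, which offers no independent argument for this lemma and simply cites \cite{DMZ:94} for the Ising fusion rules $W_1\boxtimes W_1\cong V$, $W_1\boxtimes W_2\cong W_2$, $W_2\boxtimes W_2\cong V\oplus W_1$. The additional scaffolding you supply --- the identification of three-point coinvariants on $\overline{\mathcal{M}}_{0,3}$ with fusion coefficients via intertwining operators, and the self-contragredience of $V$, $W_1$, $W_2$ --- is the standard dictionary the paper leaves implicit, and your case-by-case tabulation reproduces the stated list exactly.
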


\proof (of Proposition \ref{VirRank}). By Propagation of vacua, if $j+k \geq 3$, then the rank of the vector bundle $\VV_0(V,\{V^i,W_i^j,W_2^k\})$ is the same as the rank of $\VV_0(V,\{W_1^j,W_2^k\})$. We can then prove the theorem only for bundles of the form $\VV_0(V,\{W_1^j,W_2^k\})$ for $j+k \geq 3$.

We first show that $\VV_0(V,\{W_1^j,W_2^{2\ell+2}\})$ has rank $2^\ell$ by double induction on $\ell$ and $j$, where the cases $j=0, 1$ and $\ell=0$ follow from Lemma \ref{BB}. Using \eqref{eq:factor2} we obtain that 
\begin{multline*}
\rank \VV_0(V,\{W_1^j,W_2^{2\ell+2}\})   =\rank \VV_0(V,\{W_1^j,W_2^{2\ell}\})\, \rank \VV_0(V,\{W_2^{2}\})\\  \rank \VV_0(V,\{W_1^{j+1},W_2^{2\ell}\})\, \rank \VV_0(V,\{W_1,W_2^{2}\}) 
 + \rank \VV_0(V,\{W_1^j,W_2^{2\ell+1}\}) \, \rank \VV_0(V,\{W_2^{3}\}).
\end{multline*}
By induction on $\ell$ and Lemma \ref{BB}, we deduce that
\[\rank \VV_0(V,\{W_1^j,W_2^{2\ell+2}\})= 2^{\ell-1} +  \rank \VV_0(V,\{W_1^{j+1},W_2^{2\ell}\}),\]
and using again \eqref{eq:factor2}, Lemma \ref{BB} and induction on $\ell$ and $j$ we obtain that
\begin{multline*}\rank \VV_0(V,\{W_1^{j+1},W_2^{2\ell}\}) = 
\rank \VV_0(V,\{W_1^{j-1},W_2^{2\ell}\}) \, \rank \VV_0(V,\{W_1^{2}\})\\  \quad + \rank \VV_0(V,\{W_1^{j},W_2^{2\ell}\})\, \rank \VV_0(V,\{W_1^{3}\})+ \rank \VV_0(V,\{W_1^{j-1},W_2^{2\ell+1}\})\, \rank \VV_0(V,\{W_1^{2},W_2\})
\\
= \rank \VV_0(V,\{W_1^{j-1},W_2^{2\ell}\}) = 2^{\ell-1},
\end{multline*}
so that $\rank \VV_0(V,\{W_1^j,W_2^{2\ell+2}\})= 2^{\ell-1} +  2^{\ell-1} = 2^\ell$, as claimed.

We then show that $\rank \VV_0(V,\{W_1^{j}\})$ is equal to $1$ when $j$ is even and is $0$ when $j$ is odd. This is shown by induction on $j$, knowing that the result for $0 \leq j \leq 3$. Assume $j \geq 4$. Using \eqref{eq:factor2} we obtain that
\begin{multline*}
\rank \VV_0(V,\{W_1^{j}\}) = \rank \VV_0(V,\{W_1^{j-2}\}) \, \rank \VV_0(V,\{W_1^{2}\}) \\
+ \rank \VV_0(V,\{W_1^{j-1}\}) \, \rank \VV_0(V,\{W_1^{3}\})  
 + \rank \VV_0(V,\{W_1^{j-2},W_2\}) \, \rank \VV_0(V,\{W_1^{2},W_2\}), 
\end{multline*}
which in view of Lemma \ref{BB} is equal to
$\rank \VV_0(V,\{W_1^{2n}\}) = \rank \VV_0(V,\{W_1^{j-2}\})$. So the result holds by induction on $j$ as claimed.

We prove by induction on $\ell$ and $j$,  that $\rank \VV_0(V,\{W_1^{j}, W_2^{2\ell +1}\})=0$, knowing the result for $j=0,1$ and $\ell=0$ or $j=0$ and $\ell=1$. By \eqref{eq:factor2}, Lemma \ref{BB}, and induction on $\ell$, we have 
\begin{multline*}
\rank \VV_0(V,\{W_1^{j}, W_2^{2\ell +1}\})  \\=
 \rank \VV_0(V,\{W_1^{j}, W_2^{2\ell -1}\}) \, \rank \VV_0(V,\{W_2^2\})
 + 
 \rank \VV_0(V,\{W_1^{j+1}, W_2^{2\ell -1}\}) \, \rank \VV_0(V,\{W_1, W_2^2\})\\
  =
 \rank \VV_0(V,\{W_1^{j}, W_2^{2\ell}\}) \, \rank \VV_0(V,\{W_2^3\})
  = \rank \VV_0(V,\{W_1^{j+1}, W_2^{2\ell -1}\}).
\end{multline*}
Using \eqref{eq:factor2} again, we have that
\begin{multline*}
\rank \VV_0(V,\{W_1^{j+1}, W_2^{2\ell -1}\}) = \rank \VV_0(V,\{W_1^{j-1}, W_2^{2\ell -1}\}) \, \rank \VV_0(V,\{W_1^{2}\}) \\
 + \rank \VV_0(V,\{W_1^{j}, W_2^{2\ell -1}\}) \, \rank \VV_0(V,\{W_1^{2}\})+ \rank \VV_0(V,\{W_1^{j-1}, W_2^{2\ell}\}) \, \rank \VV_0(V,\{W_1^{2},W_2\}),
\end{multline*}
which is zero by induction on $j$, and by Lemma \ref{BB}. \endproof

\proof(of Example \ref{eg:VirC}) We use the results of \cite{dgt3} summarized in \S \ref{sec:Chern} together with the rank computations. Observe that the rank of the bundle is trivial except that in three cases analyzed below:\\
\noindent
{{Case (1) $j=k=2$:}} Since the degree of $\psi_P$ on $\overline{\mathcal{M}}_{0,4}$ is $1$, as is $\delta_{\{P_1,P_2\}}$, and since the conformal dimension of $V$ is $0$, we have  that
 ${\rm{deg}}(\mathbb{V}(V,\{W_1^2,W_2^1\}))=2\frac{1}{2}+2\frac{1}{16}-2\frac{1}{16}=1$.
{{Case  (2) $k=0$, and $ j=4$:}} ${\rm{deg}}(\mathbb{V}(V,\{W_1^4\}))=\frac{4}{2}-0=2$.
{{Case  (3) $k=4$, $j=0$:}} ${\rm{deg}}(\mathbb{V}(V,\{W_1^4\}))=2\frac{4}{16}-\frac{3}{2}=-1$.

\section{Lattice Divisor Classes}\label{LatticeVOAs}
In special case,  lattice VOAs coincide with affine Lie algebras at level 1. But generally they are distinct. In \S \ref{LatticeDes} we describe these VOAs and their modules, giving representatives of the examples of lattice VOA-bundles we have considered.  But first in \S \ref{LatticeSum} we give a brief summary of our findings about them.
    
\subsubsection{Summary}\label{LatticeSum} We show here two series of lattice VOA-bundles of rank one, the first with trivial first Chern class (Example \ref{eg:lattice4kA}), and the second with negative first Chern class (Example \ref{eg:lattice4kB}).  For the simplest example in each case, we show that the constant bundle $\mathcal{W}_0^\bullet$ also has rank one, where the rank is computed using Zhu's character formula (Example \ref{eg:lattice4kC}).  In the second case, the simplest example has the property that the modules satisfy the integral degree condition specified in Definition \ref{IC}.

\subsubsection{Description of lattice VOAs and their modules}\label{LatticeDes} Let $V=V_L$ be the lattice vertex operator algebra associated with the even lattice $(L,q)$, where $L = \bigoplus_{i=1}^d \ZZ e_i$ is a rank $d$ lattice and $q$ is an even positive definite form on $\ZZ$ such that $q(e_i, e_i)= 2 \cdot k_i$ for some $k_i \in \ZZ_{\geq 1}$ (see \cite{Borcherds,flm,DongEvenLattice}). From \cite[Theorem 3.1]{DongEvenLattice} the set of irreducible representations of $V_L$ is in bijection with the cosets of $L$ inside $L' = \{ a \in L \otimes \QQ \text{ such that } q(a,e)\in \ZZ \text{ for all }e \in L \}$. Note that $L'/L$ has further the structure of an abelian group. Given an element $[\lambda] \in L'/L$, its conformal dimension is given by the rational number $\dfrac{1}{2} \min_{e \in L}  q(\lambda + e, \lambda +e)$.
Assume that $L'/L \cong \ZZ/m\ZZ$, so that the simple representations of $L$ are indexed by elements in $\{0, \dots, m-1\}$. From \cite{dgt3} we know that 
\[\text{rank}\VV_g(V_L; W_{1}^{n_1}, \dots, W_{m-1}^{n_{m-1}}) = m^g \delta_{\sum_{j=1}^{m-1} j n_j \ \equiv \ 0 \ ({\rm{mod \  m}})},
\]  so that on $\overline{M}_{0,N}$ these sheaves of coinvariants are either trivial or line bundles.

\subsection{Particular examples: computing degrees}\label{LatticeDeg} \label{eg:lattice4k}
In what follows $L$ will be the lattice $L=\ZZ e$ with pairing $q(e,e)=4k$ for some positive $k \in \ZZ$. It follows that $L'/L$ is isomorphic to $\ZZ/4k\ZZ\cong \{0, \dots, 4k-1\}$. 

\begin{example} \label{eg:lattice4kA}  Consider on $\bMon[4]$ the space of coinvariants associated with the representations $(1, 1, 1, 4k-3)$. The degree of the line bundle $\mathbb{V}_0(1,1,1,4k-3)$ is given by the degree of  $c_1(\mathbb{V}_0(1, 1, 1, 4k-3))$, that is
\begin{equation*}
    \left( \dfrac{1}{8k} \psi_1 + \dfrac{1}{8k} \psi_2 + \dfrac{1}{8k} \psi_3 + \dfrac{9}{8k} \psi_4 \right) - \left( \dfrac{4}{8k} \delta_{[\textcolor{red}{1},\textcolor{blue}{1}][\textcolor{ForestGreen}{1},4k-3]} + \dfrac{4}{8k} \delta_{[\textcolor{red}{1},\textcolor{ForestGreen}{1}][\textcolor{blue}{1},4k-3]}  + \dfrac{4}{8k} \delta_{[\textcolor{red}{1},4k-3][\textcolor{blue}{1},\textcolor{ForestGreen}{1}]} \right),
\end{equation*}
where  boundary classes are indexed by partitions of the four points. This can be seen to have zero degree:
\begin{equation*}
    \deg(\mathbb{V}_0(1,1,1,4k-3)) = \left( \dfrac{1}{8k} + \dfrac{1}{8k} + \dfrac{1}{8k} + \dfrac{9}{8k} \right) - \left( \dfrac{4}{8k} + \dfrac{4}{8k} + \dfrac{4}{8k} \right) = 0
\end{equation*}
\end{example}

\begin{remark} In the simplest case where $k=1$, we have that the line bundle $\mathbb{V}_0(1,1,1,1)$ has degree $0$ and we will see in \S \ref{CharFormula} how to use Zhu's Character formula to prove that the constant bundle $\mathcal{W}_0^\bullet$ associated to $(1,1,1,1)$ is also a line bundle. 
\end{remark}

\begin{example} \label{eg:lattice4kB}  Consider on $\bMon[4]$ the sheaf of coinvariants associated with the representations $(k,k,k,k)$. The conformal dimension of the representation represented by $k$ equals  $\frac{k}{8}$. Following \S \ref{sec:Chern}, the first Chern class of this line bundle is 
\begin{equation*}
    c_1(\mathbb{V}_0(k,k,k,k)) = \left( \dfrac{k}{8} \psi_1 + \dfrac{k}{8} \psi_2 + \dfrac{k}{8} \psi_3 + \dfrac{k}{8} \psi_4 \right) - \left( \dfrac{k}{2} \delta_{[\textcolor{red}{2},\textcolor{blue}{2}][\textcolor{ForestGreen}{2},2]} + \dfrac{k}{2} \delta_{[\textcolor{red}{2},\textcolor{ForestGreen}{2}][\textcolor{blue}{2},2]}  + \dfrac{k}{2} \delta_{[\textcolor{red}{2},2][\textcolor{blue}{2},\textcolor{ForestGreen}{2}]} \right),
\end{equation*}
where the boundary classes are indexed by the partitions of the four points. It follows that
\begin{equation*}
    \deg (\mathbb{V}_0(k,k,k,k)) = \dfrac{k}{8}+ \dfrac{k}{8}+ \dfrac{k}{8}+ \dfrac{k}{8} - \left( \dfrac{k}{2} + \dfrac{k}{2} + \dfrac{k}{2} \right) = -k,
\end{equation*} 
and so this bundle is not globally generated. 
\end{example}

\begin{remark} We note that in this example, the  conformal dimensions of each of the modules is $\frac{k}{8}$, so for $k=2$, the sum of these conformal dimensions is integral (see Definition \ref{IC}), but the bundle still has negative degree.  This example explains why we restrict Question \ref{que:IntDeg} to vertex algebras that can be obtained from affine vertex algebras through tensor products, orbifold, and coset constructions.

We will see in \S \ref{CharFormula} how to use Zhu's Character formula to prove that the constant bundle $\mathcal{W}_\bullet$ associated to $\mathbb{V}_0(2,2,2,2)$ is also a line bundle.  We had not seen this behavior for the Virasoro bundles.  The main distinction between them in this instance is that these lattice VOAs are not constructed from an affine VOA.
\end{remark}

\subsection{Computing ranks with Zhu's Character formula}\label{CharFormula}
Here we illustrate how to use Zhu's character formula  to compute the dimension for the lowest weight spaces of modules over even lattice VOAs. Suppose that $V=V_L$ is a vertex operator algebra associated to even lattice $L$.  In particular, the lattice $L$ is determined by its rank $d$ and the quadratic form $Q=q(\,, \,)/2$. Let $W=V_{L+\lambda}$ be a simple admissible module of conformal dimension $a_{\lambda}$. By Zhu's Character Formula \cite[p.~238]{ZhuModular}  and \cite{mason.tuite:2010}, for $V$ of central charge $c$
\begin{multline}\label{eq:ZMChar}
q^{a_W-\frac{c}{24}}\sum_{n\ge 0}{\rm{dim}}W_{a_W+n}q^n=\frac{1}{\eta(\tau)^d} \sum_{\alpha \in L}q^{Q(\alpha+\lambda)}\\
=\left(q^{\frac{-1}{24}}\prod_{n=1}^{\infty}\left(\frac{1}{1-q^n}\right)\right)^d\sum_{j\in\QQ_{\geq 0}}|L_j^{\lambda}| \ q^j =q^{\frac{-d}{24}}\left(\prod_{n=1}^{\infty}\left(\frac{1}{1-q^n}\right)\right)^d\sum_{j\in\QQ_{\geq 0}}|L_j^{\lambda}| \ q^j, 
\end{multline}
where 
\begin{equation}\label{eq:R}
L_j^{\lambda}:=\{\alpha \in L \ : \ Q(\alpha + \lambda) = j\}.
\end{equation}

We note that 
\begin{equation}\label{eq:Part}
\prod_{n=1}^{\infty}\left(\frac{1}{1-q^n}\right)
=(\sum_{n_1=0}^{\infty}q^{n_1})\cdot (\sum_{n_2=0}^{\infty}q^{2n_2})\cdot (\sum_{n_3=0}^{\infty}q^{3n_3})\cdots =
\sum_{n=0}^{\infty}P(n)\,q^n,
\end{equation}
where $P(n)$ is the number of ways to write $n$ as a sum of positive integers, and $P(0)=1$. Since $V=V_L$ has central charge $c=d$ we obtain from \eqref{eq:ZMChar} and \eqref{eq:Part} that
\begin{equation*}\label{eq:ZMC2}
\sum_{n\ge 0}^{\infty}{\rm{dim}}W_{a_W+n}\, q^n  
=\sum_{n\in\QQ_{\geq 0}}\left(\sum_{\stackrel{n_1,n_2,\ldots,n_d \in \NN}{j\in\QQ_{\geq 0},\sum n_i+j=n}} |L^\lambda_j|\prod_i P(n_i) \right)q^{n-a_W}.\nonumber
\end{equation*}
In summary, the coefficient of $q^0$ on the right hand side is equal to the number of ways to write 
\[a_W=n_1+n_2+\cdots +n_d+j, \ \mbox{ with }  \ n_i \in \ZZ_{\geq 0}, \ \mbox{ and } j\in\QQ_{\geq 0},\]
and for each such way, the contribution is given by the product $|L_m^{\lambda}|\prod_iP(n_i)$. For instance, taking the trivial module $W=V_L$, represented by $\lambda=0$ with  $h=0$,  $\dim(W_0)=1$ since $|L_0^0|=1$, and $P(0)=1$.

\begin{example}\label{eg:lattice4kC}  Consider the lattice VOA from Example \ref{eg:lattice4k}, and $k=2$,  $L=e \ZZ$, with pairing $q(e,e)=8$, so $Q(a)=a^2 \cdot 4$ for every $a \in \QQ$. Then $V_L$ has central charge $1$, the module $W=W_{\frac{1}{4}}$ has conformal dimension $Q(\frac{1}{4})=\frac{1}{4}$. From the argument  above it follows that the dimension of $W_0$ is given by the following recipe:
\begin{equation} \label{eq:dimWolattice} \dim W_0=\sum_{N=0}^\infty |L_{\frac{1}{4}-N}^{\frac{1}{4}}| \ P(N).
\end{equation}
In fact we have that there is only one way in which we can write $\frac{1}{4}=a_w = N+j$ with $N \in \ZZ_{\geq 0}$, and $j \in \QQ_{\geq 0}$, that is $N=0$ and $j=\frac{1}{4}$. Otherwise said, in \eqref{eq:dimWolattice} we can see that $\frac{1}{4}-N$ is non negative only for $N=0$, which implies that the only non zero contribution from $|L_{\frac{1}{4}-N}^{\frac{1}{4}}|$ is obtained when $N=0$. It follows that $\dim W_0 = |L_{\frac{1}{4}}^{\frac{1}{4}}|$. In particular, one has that $\dim W_0 =1$ since by definition of $L^j_\lambda$, we have 
\begin{equation*}
     L_{\frac{1}{4}}^{\frac{1}{4}} = \left\lbrace \alpha \in \ZZ \, \text{ such that } \, Q(\alpha + \frac{1}{4}) 
     = \frac{1}{4} \right\rbrace  \\
        = \left\lbrace \alpha \in \ZZ \, \text{ such that } \, 2\alpha(2\alpha +1)=0 \right\rbrace = \left\lbrace 0 \right\rbrace.
\end{equation*} 

\end{example}

\section{Questions}\label{Questions}

Given $r$ simple Lie algebras $\mathfrak{g}_j$,  positive integers $\ell_j$, and for $j \in \{1, \dots, r\}$,  an $n$-tuple of simple $L_{\ell_j}({\mathfrak{g}_j})$-modules $(W^1_{j}, \ldots, W^n_{j})$, we can ask:
 
\begin{question} \label{Q5}
Are $\mathbb{V}_g(\bigotimes_{j=1}^r L_{\ell_j}({\mathfrak{g}_j}), \bigotimes_{j=1}^r W^{\bullet}_{j})$ and $\bigotimes_{j=1}^r\mathbb{V}_g(L_{\ell_j}({\mathfrak{g}_j}), W^{\bullet}_{j})$ isomorphic? 
\end{question}

\begin{remark}\label{SD} Much is known about the classes of bundles of coinvariants for simple affine  VOAs $L_{\ell}(\mathfrak{g})$, which are $C_2$-cofinite and rational if and only if $\ell \in \mathbb{Z}_{>0}$. For instance by \cite{Bertram,Thaddeus,Faltings,KNR,bl1, P}, in this case, there are  canonical isomorphisms
between generalized theta functions with (the dual spaces to) vector spaces of coinvariants at {\em{smooth}} curves.  It has been shown that this extends to families of stable pointed curves with singularities \cite{BF1}
  \begin{equation}\label{eq:BF1}\mathbb{V}(L_\ell({\mathfrak{g}}), W^{\bullet})|^{\vee}_{(C,P_{\bullet})}
 \cong \textrm{H}^0(\mathcal{B}\textit{un}^{\text{Par}}_{G}(C,P_{\bullet}), \mathcal{L}^{\ell}).
 \end{equation}
Here $\mathcal{L}$ is a canonical line bundle on the stack $\mathcal{B}\textit{un}^{\text{Par}}_{G}(C,P_{\bullet})$ of parabolic $G$-bundles, and $G$ is a simple, simply connected algebraic group with $\text{Lie}(G)=\mathfrak{g}$. For $G=\text{SL(r)}$ and $W^\bullet = V^\bullet$ there is a natural map ${\rm{SD}}$:
\[\xymatrix{\mathbb{V}(L_{\ell}({\mathfrak{sl}_r}), W^\bullet)|_{(C)} \cong \text{H}^0(\mathcal{M}_{\text{SL}(r)}(C), \mathcal{L}^{\ell})^{\vee} \ar[r]^-{\cong}_-{\text{SD}} &  \text{H}^0(\mathcal{M}_{\text{GL}(\ell)(C)}, \theta^r),}\]
where  $\mathcal{M}_{\text{SL}(r)}(C)$ is the  moduli space of semi-stable vector bundles of rank $r$  and trivial determinant on $C$,  $\mathcal{M}_{GL(\ell)}(C)$ is the moduli space of  semi-stable vector  bundles of rank $\ell$, and degree $\ell(g-1)$ on $C$, and where one has $\theta=\{ \ \mathcal{E} \in \mathcal{M}_{\text{GL}(\ell)} \ : \ \text{H}^0(C, \mathcal{E})\ne 0\ \}$. Donagi and Tu \cite{DonTu} showed the dimensions of these vector spaces were the same, and stated what became known as the strange duality conjecture. Various special cases had appeared earlier in the physics literature, e.g.~in \cite{NaSc} (see also  \cite{NakanishiTsuchiya}).   Pantev \cite{PSD} generalized the dimension statement  to the case where $R$ is reductive and  $G=[R,R]$ is semi-simple. The conjecture was proved in type A by Belkale \cite{BSD1}, and Marian-Oprea \cite{MOSD}.  Abe in \cite{Abe} proved  strange duality in the symplectic setting conjectured by Beauville \cite{BeauSD} (see also \cite{BSD4}), and has been studied for other cases  \cite{MSD1, MSD2, BPSD, MukWen}.  In \cite[Question 1]{dgt3} it was asked whether there are analogous geometric interpretations of dual spaces for vector spaces of conformal blocks defined by vertex operator algebras.
   
If the answer to Question \ref{Q5} is yes, then by Theorem \ref{thm:GG}, \cite{DonTu}, and  \cite{PSD}, an induced level-rank duality dimension statement for will hold for  vector spaces of conformal blocks given by any simple, rational, $C_2$-cofinite, self-contragredient, vertex operator algebra $V$ of CFT-type, strongly generated in degree one, since by \cite{DongMasonIntegrability},   $V\cong \otimes_{j}L_{\ell_j}({\mathfrak{g}_j})$. One also obtains a canonical identification between generalized theta functions with (the dual spaces to) vector spaces of coinvariants from \eqref{eq:BF1} for these spaces.  Moreover, if the $\mathfrak{g}_j$  are (combinations) in types $A$ or $C$, then by \cite{BSD1}, \cite{MOSD}, and \cite{Abe} the vector spaces will be subject to strange dualities.  \end{remark}

In \S \ref{Holomorphic} examples of globally generated line bundles  defined on moduli spaces of positive genus curves were given.  For these, the constant sheaf also had rank one. In \S \ref{UnitaryVir} a number of representative examples were given, of bundles on $\overline{\mathcal{M}}_{0,n}$ defined by VOAs where 
if the rank of the constant bundle $\mathcal{W}_0^\bullet$ is at least as large as the rank of the coinvariants, then the vector bundle of coinvariants was positive, and otherwise the vector bundle of coinvariants was positive if and only if its modules satisfied an integral degree condition (See Definition \ref{IC}).  
 
\begin{question}\label{que:IntDeg} Let $V$ be a  VOA that can be obtained from affine vertex algebra through tensor product, orbifold, or coset construction. Suppose that one of the following properties hold: 
	\begin{enumerate}
    \item the rank of the constant bundle $\mathcal{W}_0^\bullet$ is at least as large as the rank of the coinvariants; or
    \item the conformal dimensions of the modules sum to an integer.
     \end{enumerate}
Is $\mathbb{V}_0(V, W^{\bullet})$ globally generated on $\overline{\mathcal{M}}_{0,n}$?
\end{question}

\begin{question}\label{Q2} Is there another  constant bundle that maps to the sheaf of coinvariants $\mathbb{V}_g(V, W^{\bullet})$?  \end{question}

\begin{remark}\label{OtherConstant} Tsuchiya, Ueno, and Yamada have observed that  the map  from 
the $d$-th part of the filtration $\mathcal{F}_d(W^{\bullet})$ to  $gr_d({\VV}_g(L_{\ell}(\mathfrak{g});W^\bullet)|_{(C,P_{\bullet})})$ is surjective, for $(C,P_{\bullet})$ a smooth $n$-pointed curve, at integrable levels  \cite[Proposition 3.23]{tuy}. Together with factorization, this is used to prove coherence of ${\VV}_g(L_{\ell}(\mathfrak{g});W^\bullet)$ on $\overline{\mathcal{M}}_{g,n}$. Using the Weierstrass gap theorem, one can extend their observation to stable curves with singularities  for $g>0$ and $n >\!\!>0$.  This defines a surjective map from the sheaf $\mathcal{W}_d^\bullet$ introduced in Lemma \ref{CLemma} to 
${\VV}_g(L_{\ell}(\mathfrak{g});W^\bullet)$, analogous to the surjective map from $\mathcal{W}_0^\bullet$  to ${\VV}_0(L_{\ell}(\mathfrak{g});W^\bullet)$ shown in the proof of Theorem \ref{thm:GG}.  Also in \ref{CLemma}, $\mathcal{W}_0^\bullet$ is shown to be independent of a change of coordinates, so descends to a  constant sheaf on $\overline{\mathcal{M}}_{0,n}$.  However, we know from examples of non-nef divisors $c_1({\VV}_g(L_{\ell}(\mathfrak{g});W^\bullet))$ for positive genus $g$, without further assumptions, $\mathcal{W}_d^\bullet$ is not independent of a change of coordinates, and doesn't descend to a constant sheaf on $\overline{\mathcal{M}}_{g,n}$.

One could also try to base a constant bundle 
on the product $\bigotimes_i W^i/C_2(W^i)$, which maps surjectively onto coinvariants (the key step for proving finite generation \cite[Proposition   5.1.1.]{dgt2}). However, again, without further assumptions, such a sheaf would not be independent of a change of coordinates.
\end{remark}

\begin{remark}\label{rmk:Nef} We have been asked whether  bundles of coinvariants from modules over  general vertex operator algebras  give new nef classes, apart  from those given by bundles from affine Lie algebras. The ranks of the more general bundles are the same as the ranks of the bundles from affine Lie algebras, but in the formulas for the first Chern class given in \cite{dgt3},  which are valid if $V$ is self-contragredient, rational, $C_2$-cofinite, and of CFT-type, the coefficients determined by the conformal dimensions of the modules can be very different than those for simple affine VOAs. One can therefore obtain new classes, although we have not done a careful study to see if the cones obtained with more general VOAs are larger than cones generated by the classical divisors. 
 \end{remark}

\bibliographystyle{amsalpha}

\bibliography{Biblio}

\end{document}